\newtheorem{theorem}{Theorem}[section]
\newtheorem{defn}[theorem]{Definition}
\newtheorem{lemma}[theorem]{Lemma}
\newtheorem{prop-def}{Proposition-Definition}[section]
\newcommand{\nc}{\newcommand}
\newcommand{\delete}[1]{}
\nc{\mlabel}[1]{\label{#1}}  
\nc{\mcite}[1]{\cite{#1}}  
\nc{\mref}[1]{\ref{#1}}  
\nc{\mbibitem}[1]{\bibitem{#1}} 
\nc{\mlabel}[1]{\label{#1}  
{\hfill \hspace{1cm}{\bf{{\ }\hfill(#1)}}}}
\nc{\mcite}[1]{\cite{#1}{{\bf{{\ }(#1)}}}}  
\nc{\mref}[1]{\ref{#1}{{\bf{{\ }(#1)}}}}  
\nc{\mbibitem}[1]{\bibitem[\bf #1]{#1}} 
\nc{\bfk}{\mathbf{k}}
\nc{\Der}{\mathrm{Der}}
\nc{\Ker}{\mathrm{Ker}}
\begin{document}

\title{The infinite dimensional  Unital 3-Lie Poisson algebra}\footnotetext{ Corresponding author: Ruipu Bai, E-mail: bairuipu@hbu.edu.cn.}

\author{Chuangchuang Kang}
\address{College of Mathematics and Information Science,
Hebei University, Baoding 071002, P.R. China}
         \email{kangchuang2016@163.com}

         \author{Ruipu  Bai}
\address{College of Mathematics and Information Science,
Hebei University,
Key Laboratory of Machine Learning and Computational Intelligence of Hebei Province, Baoding 071002, P.R. China} \email{bairuipu@hbu.edu.cn}

\author{Yingli Wu}
\address{College of Mathematics and Information Science,
Hebei University, Baoding 071002, P.R. China}
         \email{15733268503@163.com}

\date{}

\begin{abstract}
From a commutative associative algebra $A$, the  infinite dimensional unital 3-Lie Poisson algebra~$\mathfrak{L}$~is constructed, which is also a canonical Nambu 3-Lie algebra, and the structure of $\mathfrak{L}$ is discussed. It is proved that:  (1) there is a minimal set of generators $S$ consisting of six vectors; (2) the quotient algebra $\mathfrak{L}/\mathbb{F}L_{0, 0}^0$ is a simple  3-Lie Poisson algebra;
(3) four important infinite dimensional  3-Lie algebras: 3-Virasoro-Witt algebra $\mathcal{W}_3$, $A_\omega^\delta$,  $A_{\omega}$ and the 3-$W_{\infty}$ algebra can be embedded in $\mathfrak{L}$.
\end{abstract}

\keywords{ 3-Lie algebra, 3-Lie Poisson algebra, canonical Nambu 3-Lie algebra}

\maketitle
\vspace{-.5cm}


\numberwithin{equation}{section}

\allowdisplaybreaks

\section{Introduction}

The role of $n$-Lie Poisson algebra in string theory has received more and more attention in recent years(\cite{Axenides,Sochichiu,Azc¨¢rraga,Papadopoulos}).~For example, the applications of $n$-Lie Poisson algebra
  in
  noncommutative geometry and  the quantum geometry of branes in M-theory have been considered in \cite{DSS}. The theory of~$n$-Lie Poisson algebras provides an useful method to describe multiple M2 branes (\cite{JN,JN2,JN3}), and algebraical and geometrical structures of Nambu-Poisson manifold (\cite{DeBellis,Das,Vallejo}).

The discovery of 3-bracket in 1973 triggered a huge amount of innovative scientific inquiry. In \cite{Nambu}, N. Nambu  first proposed the notion of 3-bracket for constructing the  generalized Hamiltonian dynamics. In \cite{Takh}, Takhtajan studied the  algebraic structures in Nambu mechanics, and indicated the relation between Nambu mechanics and $n$-Lie algebras. In 1985, Filippov introduced  $n$-Lie algebras (\cite{Filippov}), and then the structures are studied  in \cite{Hanlon,Loday,Michor,Pozhidaev1,Pozhidaev2,Ling}. These earlier studies demonstrate a strong and consistent association between  $n$-Lie algebras and $n$-Lie Poisson algebras.  The $n$-Lie Poisson algebra comes from the concept of generalized Poisson ($n$-Poisson or Nambu-Poisson) structure which is naturally defined for $n$-brackets with an even number of entries,  and parallels the properties of higher order generalized Lie algebras (or $n$-Lie algebras)(\cite{Azc¨¢rraga2}).

An $n$-Lie Poisson algebra is an associative commutative algebra with a totally antisymmetric $n$-Lie bracket satisfying  the generalized Leibniz rule and the fundamental identity (FI) given in \cite{Filippov}. Actually, removing the property of Leibniz rule requirement for the $n$-Lie Poisson algebras, while keeping the fundamental identity, we get $n$-Lie algebras. Further, if the $n$-bracket need not be anticommutative, we can get another useful algebras which are called $n$-Leibniz algebras (or $n$-Loday algebras) (\cite{Daletskii,Casas}). Physically, the fundamental identity is a consistency condition for the time evolution,~which is given in terms of $(n-1)$ ¡®Hamiltonian¡¯ functions that determines derivations of $n$-Lie algebras(\cite{Sahoo}).

However,~the main challenge faced by  researchers is how to find  $n$-Lie Poisson algebras. Since the multiple multiplication and Leibniz rule should be satisfied simultaneously, the structure of $n$-Lie Poisson algebras is more complicated than that of $n$-Lie algebras. So the construction of  3-Lie Poisson algebras as the special case of
$n$-Lie Poisson algebras is very important.

In \cite{CTDFC}, Curtright constructed a 3-Virasoro-Witt algebra through the use of $su(1,1)$ enveloping algebra techniques. In \cite{BR-2}, Bai provided two methods for  constructing  infinite dimensional 3-Lie algebras $A_{\omega}$ and $A_{\omega}^{\delta}$ from group algebras. In \cite{CSAS},
Chakrabortty  obtained $W_{\infty}$ 3-algebras by using ``lone-star'' product of generators in $W_{\infty}$-algebras as well as their commutation relations, and appropriate double scaling limits.
Connected strongly to physics,  the 3-Lie algebras constructed as above are very important.

The purpose of this paper is to construct  an infinite dimensional 3-Lie Poisson algebra $\mathfrak{L}$ which contains 3-Virasoro-Witt algebra $\mathcal{W}_3$, 3-Lie algebras $A_{\omega}$,  $A_{\omega}^{\delta}$,  and $W_{\infty}$ 3-algebras  simultaneously.

In Section 2, we introduce some basic notions.
In Section 3, we provide an infinite dimensional  unital 3-Lie Poisson algebra $\mathfrak L$ and realize it by canonical Nambu $3$-Lie algebras.
Section 4 we pay close attention to the Lie structure of unital 3-Lie Poisson algebra  $\mathfrak L$  and study four types of 3-Lie subalgebras.
Section 5 is devoted to derivations of unital 3-Lie Poisson algebra.

Unless otherwise stated,  algebras and vector spaces are over a field $\mathbb{F}$ of characteristic zero,  $\mathbb{Z}$ is the set of integers, $\mathbb{Z}^{+}$ is the set of positive integers, and $\mathbb R$ is the real field.
For any  algebra $A$, and $S\subseteq A$, we use $\langle S\rangle$ to denote the subalgebra of $A$ generated by $S$.

\section{Preliminary}
\setcounter{equation}{0}
\renewcommand{\theequation}
{2.\arabic{equation}}

\begin{defn}\cite{Filippov}\label{defn:nLiealgebra} An {\bf $n$-Lie algebra }$A$ is a vector space over $ \mathbb F$ endowed with an $n$-ary multi-linear skew-symmetric multiplication $[ ,\cdots , ]$ satisfying  Fundamental Identity (FI), for all $x_1,x_2,\cdots,x_n,y_2,\cdots,y_n\in A$,
\begin{equation}\label{eq:Jaco}
[[ x_1,\cdots, x_n],y_2,\cdots,y_n]=\sum\limits_{i=1}^n[x_1, \cdots [x_i, y_2,\cdots,y_n], \cdots x_n].
\end{equation}
\end{defn}

Let $A$ be an $n$-Lie algebra. For any $x_1, \cdots, x_{n-1}\in A$, the linear mapping $\mbox{ad}(x_1, \cdots, x_{n-1}): A\rightarrow A$ defined by
\begin{equation}\label{eq:left}
\mbox{ad}(x_1, \cdots, x_{n-1})(x)=[x_1, \cdots, x_{n-1}, x], ~~ \forall x\in A,
\end{equation}
is called {\bf the left multiplication determined by $x_1, \cdots, x_{n-1}$}.  Thanks to \eqref{eq:Jaco}, left multiplications are derivations.

\begin{defn}\cite{DA}\label{defn:Jacobian algebra}
An  {\bf $n$-Lie Poisson algebra} (or  Nambu-Poisson algebra) $(U,\cdot,[ , \cdots,])$ over a field $\mathbb F$ is a linear vector space $U$ with $\mathbb F$-linear multiplications  $\cdot : U\otimes U\rightarrow U$, and $[ , \cdots, ]: \wedge^nU\rightarrow U$ satisfying
\begin{itemize}
\item $(U,\cdot)$ is an associative commutative algebra;
\item $(U, [ , \cdots,])$ is an n-Lie algebra;
\item the following Leibniz rule holds:
\begin{equation}\label{eq:Lei1}
[ x_1\cdot x_2,y_2,\cdots,y_n]=[x_1,y_2,\cdots,y_n]\cdot x_2+x_1\cdot [x_2,y_2,\cdots,y_n],\forall~ x_1,x_2,y_2,\cdots,y_n\in U.
\end{equation}
\end{itemize}
\end{defn}
If there is an unit element in $(A, \cdot)$, then  $A=(U,\cdot,[ , \cdots,])$  is called {\bf an unital $n$-Lie Poisson algebra}.

An $n$-Lie Poisson algebra $A=(U,\cdot, [, \cdots, ])$ is usually denoted by  $A=(U,[ , \cdots,])$ or $A$, for all $u, v\in U$, $u\cdot v$ is denoted by $uv.$

Recall  \textbf{Jacobian algebras} given in \cite{BR-2}.
Let $J$ be a commutative associative algebra,  $D_1,\cdots,D_n$ be  commutative  derivations of $J$, then  the multiplication on $J$ defined by
\begin{equation}\label{eq:Jacobian}
  [x_1,\cdots,x_n]=det\left(
                        \begin{array}{ccc}
                          D_1(x_1) & \cdots & D_1(x_n) \\
                          \cdots   & \cdots & \cdots \\
                          D_n(x_1) & \cdots & D_n(x_n) \\
                        \end{array}
                      \right),~~\forall~ x_1,x_2,...,x_n\in J,
\end{equation}
satisfies Definition \ref{defn:Jacobian algebra}. Therefore, $(J, [ ,\cdots, ])$  is an $n$-Lie Poisson algebra, and it is called the Jacobian algebra defined by  commutative  derivations $\{D_1,\cdots,D_n\}$.

\section{Unital 3-Lie Poisson algebra}
\setcounter{equation}{0}
\renewcommand{\theequation}
{3.\arabic{equation}}
Let $A$ be a commutative associative algebra with  a basis $\{L_{l,m}^{r} ~|~ l,m,r\in \mathbb{Z}+\frac{1}{2}\mathbb{Z}\}$, and the multiplication in the basis be as follows
\begin{equation}\label{eq:muit}
  L_{l_1,m_1}^{r_1}\cdot L_{l_2,m_2}^{r_2}=L_{l_1+l_2,m_1+m_2}^{r_1+r_2},~~\forall~ r_i,l_i,m_i\in \mathbb{Z}+\frac{1}{2} \mathbb{Z},~~1\leq i\leq 2.
\end{equation}

Since for all $r,l,m\in\mathbb{Z}+\frac{1}{2} \mathbb{Z} $,
\begin{equation}\label{eq:unit}
  L_{0,0}^{0}\cdot L_{l,m}^{r}=L_{l,m}^{r},
\end{equation}
$L_{0,0}^{0}$ is the only unit element of $A$.

For convenience, for all   $\alpha_1=(l_1, m_1, r_1),~ \alpha_2=(l_2, m_2, r_2),~ \alpha_3=(l_3, m_3, r_3) \in (\mathbb{Z}+\frac{1}{2} \mathbb{Z})^{ 3}$, $ M(\alpha_1,\alpha_2,\alpha_3)$ denotes the determinant
\begin{equation}\label{eq:r1l1m1}
 M(\alpha_1,\alpha_2,\alpha_3):=\left|
\begin{array}{ccc}
 r_1 & r_2 & r_3 \\
l_1 & l_2 & l_3 \\
 m_1 & m_2 & m_3 \\
\end{array}
\right|, ~~\mbox {and }~~ \nu=(1, 1, 0)\in (\mathbb{Z}+\frac{1}{2} \mathbb{Z})^{ 3}.
\end{equation}

Define the 3-ary multiplication $[ , , ]: A^{\wedge 3}\rightarrow A$ as follows,
$ \forall~~ \alpha_1=(l_1, m_1, r_1)$, $\alpha_2=(l_2, m_2, r_2)$, $\alpha_3=(l_3, m_3, r_3)\in (\mathbb{Z}+\frac{1}{2} \mathbb{Z})^{ 3}$,
\begin{equation}\label{eq:3-Jacobi bracket}
[L_{l_1,m_1}^{r_1},L_{l_2,m_2}^{r_2},L_{l_3,m_3}^{r_3}]:= M(\alpha_1,\alpha_2,\alpha_3)L_{l_1+l_2+l_3-1,m_1+m_2+m_3-1}^{r_1+r_2+r_3}.
\end{equation}

By the above notations,  we have the following result.
\begin{theorem}\label{thm:3-Jacobi}   $(A,\cdot,[ , , ])$ is an unital 3-Lie Poisson algebra.
\end{theorem}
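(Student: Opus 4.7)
The plan is to exhibit $(A,\cdot,[\,,\,,\,])$ as a Jacobian algebra in the sense of \eqref{eq:Jacobian}. Since the paragraph preceding the theorem recalls that every Jacobian algebra is automatically an $n$-Lie Poisson algebra, and since $(A,\cdot)$ is by construction a unital commutative associative algebra with unit $L_{0,0}^{0}$, this will immediately yield the theorem. The three commuting derivations I would use are $D_1,D_2,D_3$ defined on the basis by $D_1(L_{l,m}^{r}):=rL_{l,m}^{r}$, $D_2(L_{l,m}^{r}):=lL_{l-1,m}^{r}$, and $D_3(L_{l,m}^{r}):=mL_{l,m-1}^{r}$, extended linearly.

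First I would check that each $D_i$ is a derivation of $(A,\cdot)$; using \eqref{eq:muit}, the Leibniz rule reduces on basis elements to the trivial identities $r_1+r_2=r_1+r_2$, $l_1+l_2=l_1+l_2$, $m_1+m_2=m_1+m_2$, respectively. Next I would check that $D_i$ and $D_j$ commute for all $i,j$: since the three operations ``scale by $r$'', ``scale by $l$ and shift $l\mapsto l-1$'', and ``scale by $m$ and shift $m\mapsto m-1$'' act on disjoint coordinates, all commutators $[D_i,D_j]$ vanish termwise on the basis.

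Then I would expand the $3\times 3$ Jacobian determinant
\begin{equation*}
\det\bigl(D_i(L_{l_j,m_j}^{r_j})\bigr)_{1\le i,j\le 3}
\end{equation*}
and observe that each of the six terms in the Leibniz expansion picks up one factor from row $1$ (contributing an $r$, no shift), one from row $2$ (contributing an $l$ and shifting its $l$-index by $-1$), and one from row $3$ (contributing an $m$ and shifting its $m$-index by $-1$). Hence every term carries the same basis factor $L_{l_1+l_2+l_3-1,\,m_1+m_2+m_3-1}^{r_1+r_2+r_3}$, and the signed scalar coefficients assemble to exactly $M(\alpha_1,\alpha_2,\alpha_3)$; this matches \eqref{eq:3-Jacobi bracket} on the nose. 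Antisymmetry, the fundamental identity \eqref{eq:Jaco}, and the Leibniz rule \eqref{eq:Lei1} then come for free from the cited Jacobian algebra theorem, while \eqref{eq:unit} supplies the unit.

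The approach carries no substantive obstacle: the only point to double-check is that $D_2,D_3$ are well defined on $A$, which holds because shifting the $l$- or $m$-index by $-1$ preserves $\mathbb{Z}+\tfrac{1}{2}\mathbb{Z}$ and no division by a basis element is ever required. The only mildly non-routine step is recognising the correct derivations; once they are written down, the coincidence of the two forms of the $3\times 3$ determinant is a direct expansion.
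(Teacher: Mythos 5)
Your proposal is correct, but it takes a genuinely different route from the paper's own proof of this theorem. The paper verifies the fundamental identity \eqref{eq:Jaco} and the Leibniz rule \eqref{eq:Lei1} by direct computation on basis elements, reducing both to determinant identities for $M(\alpha_1,\alpha_2,\alpha_3)$. You instead realize the bracket as a Jacobian bracket \eqref{eq:Jacobian} for the three commuting derivations $D_1(L_{l,m}^{r})=rL_{l,m}^{r}$, $D_2(L_{l,m}^{r})=lL_{l-1,m}^{r}$, $D_3(L_{l,m}^{r})=mL_{l,m-1}^{r}$, and your expansion of $\det\bigl(D_i(L_{l_j,m_j}^{r_j})\bigr)$ does recover \eqref{eq:3-Jacobi bracket}: every permutation term carries the common factor $L_{l_1+l_2+l_3-1,\,m_1+m_2+m_3-1}^{r_1+r_2+r_3}$ and the scalars assemble into $M(\alpha_1,\alpha_2,\alpha_3)$. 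Your derivations are precisely the abstract counterparts of $\partial/\partial x$, $\partial/\partial y$, $\partial/\partial z$ acting on $y^{l}z^{m}e^{rx}$, so in effect you prove Theorem \ref{thm:3-Jacobi} by anticipating the paper's next result (the realization of $\mathfrak{L}$ as a canonical Nambu $3$-Lie algebra), which the paper presents only a posteriori. What each approach buys: yours is shorter, conceptually cleaner, and makes the two theorems of Section 3 a single observation, but it rests on the Jacobian-algebra fact recalled in Section 2, which the paper states without proof (citing \cite{BR-2}); the paper's direct verification is self-contained, at the cost of a somewhat terse appeal to ``the properties of determinant'' in the FI step. Since the paper itself invokes \eqref{eq:Jacobian} in exactly this way when proving the realization theorem, your reliance on it is consistent with the paper's own standards.
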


\begin{proof} We only need to prove that \eqref{eq:Jaco} and \eqref {eq:Lei1} hold.  By \eqref{eq:3-Jacobi bracket}, for all  $ \alpha_i=(l_i, m_i, r_i)\in (\mathbb{Z}+\frac{1}{2}\mathbb{Z})^{3},~ 1\leq i\leq 5$,

\begin{equation*}
\begin{split}
&[[L_{l_1,m_1}^{r_1},L_{l_2,m_2}^{r_2},L_{l_3,m_3}^{r_3}],L_{l_4,m_4}^{r_4},L_{l_5,m_5}^{r_5}]\\
=&[M(\alpha_1,\alpha_2, \alpha_3)L_{l_1+l_2+l_3-1,m_1+m_2+m_3-1}^{r_1+r_2+r_3},L_{l_4,m_4}^{r_4},L_{l_5,m_5}^{r_5}]\\\\
=&M(\alpha_1,\alpha_2, \alpha_3)M(\alpha_1+\alpha_2+\alpha_3-\nu, \alpha_4,\alpha_5)L_{l_1+l_2+l_3+l_4+l_5-2,m_1+m_2+m_3+m_4+m_5-2}^{r_1+r_2+r_3+r_4+r_5},
\end{split}
\end{equation*}

\begin{equation*}
\begin{split}
&[[L_{l_1,m_1}^{r_1},L_{l_4,m_4}^{r_4},L_{l_5,m_5}^{r_5}],L_{l_2,m_2}^{r_2},L_{l_3,m_3}^{r_3}]
+[L_{l_1,m_1}^{r_1},[L_{l_2,m_2}^{r_2},L_{l_4,m_4}^{r_4},L_{l_5,m_5}^{r_5}],L_{l_3,m_3}^{r_3}]\\
+&[L_{l_1,m_1}^{r_1},L_{l_2,m_2}^{r_2},[L_{l_3,m_3}^{r_3},L_{l_4,m_4}^{r_4},L_{l_5,m_5}^{r_5}]]\\
=&M(\alpha_1,\alpha_4, \alpha_5)[L_{l_1+l_4+l_5-1,m_1+m_4+m_5-1}^{r_1+r_4+r_5},L_{l_2,m_2}^{r_2},L_{l_3,m_3}^{r_3}]\\
+&M(\alpha_2,\alpha_4, \alpha_5)[L_{l_2+l_4+l_5-1,m_2+m_4+m_5-1}^{r_2+r_4+r_5},L_{l_3,m_3}^{r_3},L_{l_1,m_1}^{r_1}]\\
+&M(\alpha_3,\alpha_4, \alpha_5)[L_{l_3+l_4+l_5-1,m_3+m_4+m_5-1}^{r_3+r_4+r_5},L_{l_1,m_1}^{r_1},L_{l_2,m_2}^{r_2}]\\
=&M(\alpha_1,\alpha_4, \alpha_5)M(\alpha_1+\alpha_4+\alpha_5-\nu, \alpha_2,\alpha_3)L_{l_1+l_2+l_3+l_4+l_5-2,m_1+m_2+m_3+m_4+m_5-2}^{r_1+r_2+r_3+r_4+r_5}\\
+&M(\alpha_2,\alpha_4, \alpha_5)M(\alpha_2+\alpha_4+\alpha_5-\nu, \alpha_1,\alpha_3)L_{l_1+l_2+l_3+l_4+l_5-2,m_1+m_2+m_3+m_4+m_5-2}^{r_1+r_2+r_3+r_4+r_5}\\
+&M(\alpha_3,\alpha_4, \alpha_5)M(\alpha_3+\alpha_4+\alpha_5-\nu, \alpha_1,\alpha_2)L_{l_1+l_2+l_3+l_4+l_5-2,m_1+m_2+m_3+m_4+m_5-2}^{r_1+r_2+r_3+r_4+r_5}.\\
\end{split}
\end{equation*}
Thanks to the properties of determinant, \eqref{eq:Jaco} holds.
 Since
\begin{equation*}
\begin{split}
&[L_{l_1,m_1}^{r_1},L_{l_2,m_2}^{r_2},L_{l_3,m_3}^{r_3}\cdot L_{l_4,m_4}^{r_4}]=[L_{l_1,m_1}^{r_1},L_{l_2,m_2}^{r_2},L_{l_3+l_4,m_3+m_4}^{r_3+r_4}]\\
=&M(\alpha_1,\alpha_2, \alpha_3+\alpha_4)L_{l_1+l_2+l_3+l_4-1,m_1+m_2+m_3+m_4-1}^{r_1+r_2+r_3+r_4}\\
=&M(\alpha_1,\alpha_2, \alpha_3)L_{l_1+l_2+l_3+l_4-1,m_1+m_2+m_3+m_4-1}^{r_1+r_2+r_3+r_4}\\
+&M(\alpha_1,\alpha_2, \alpha_4)L_{l_1+l_2+l_3+l_4-1,m_1+m_2+m_3+m_4-1}^{r_1+r_2+r_3+r_4}\\
=&[L_{l_1,m_1}^{r_1},L_{l_2,m_2}^{r_2},L_{l_3,m_3}^{r_3}]\cdot L_{l_4,m_4}^{r_4}
+L_{l_3,m_3}^{r_3}\cdot[L_{l_1,m_1}^{r_1},L_{l_2,m_2}^{r_2}, L_{l_4,m_4}^{r_4}],
\end{split}
\end{equation*}
\eqref{eq:Lei1} holds. Therefore,  $(A,\cdot,[ , , ])$ is an unital 3-Lie Poisson algebra.
\end{proof}

 In the following,  {\bf $\mathfrak{L}$ denotes the unital 3-Lie Poisson algebra $ (A,\cdot,[,  , ])$ in Theorem \ref{thm:3-Jacobi}, and $\mathfrak{A}$ denotes  the $3$-Lie algebra structure of $\mathfrak{L}$.}

 Now we give some symbols.  Let
$$\mathfrak{A}_1~(0,~~1~~;  ~~1,~~0), \quad \mathfrak{A}_2~(0,~~1~~; ~~1,~~0),  \quad \mathfrak{A}_3~(0,~~1~~ ; ~~1,~~0), \quad \mathfrak{A}_4~(\frac{1}{2}) \quad \mbox{ and } \quad \mathfrak{A}_5~(\frac{1}{2})$$
 \\be 3-Lie subalgebras of 3-Lie algebra  $\mathfrak{A}$ generated by the subsets

$$S_1=\big\{-L_{0,1}^{r}+\sqrt{-1}rL_{1,0}^{r},~~ -L_{1,0}^{r} |~~r\in \mathbb{Z}\big\}, ~~ S_2=\big\{-L_{0,1}^{r}-\sqrt{-1}rL_{1,0}^{r},~~ L_{1,0}^{r} |~~ r\in \mathbb{Z}\big\},$$

 $$S_3=\big\{L_{0,1}^{r},  L_{1,0}^{-r} |~~r\in \mathbb Z\big\} ,~~  \quad S_4=\big\{ L_{\frac{1}{2},r}^{(-1)^{r}} |~~ r\in \mathbb Z\big\}, \quad S_5=\big\{  L_{\frac{1}{2},m+\frac{1}{2}}^{r} |~~ r, m\in \mathbb Z\big\},$$ respectively. Then
\begin{eqnarray}
\label{eq:u1}\mathfrak{A}_1~(0,~~1~~;  ~~1,~~0)&:~~=\langle S_1\rangle=&\langle~~\big\{-L_{0,1}^{r}+\sqrt{-1}rL_{1,0}^{r},~~ -L_{1,0}^{r} |~~r\in \mathbb{Z}\big\}~~\rangle,\\
\label{eq:u2}\mathfrak{A}_2~(0,~~1~~; ~~1,~~0)&:~~=\langle S_2\rangle=&\langle ~~\big\{-L_{0,1}^{r}-\sqrt{-1}rL_{1,0}^{r},~~ L_{1,0}^{r} |~~ r\in \mathbb{Z}\big\}~~\rangle,\\
\label{eq:u3}\mathfrak{A}_3~(0,~~1~~ ; ~~1,~~0)&:~~=\langle S_3\rangle=&\langle~~ \big\{L_{0,1}^{r},  L_{1,0}^{-r} |~~r\in \mathbb Z\big\}~~\rangle,\\
\label{eq:u4}\mathfrak{A}_4~(\frac{1}{2})&:~~=\langle S_4\rangle=&\langle ~~ \big\{ L_{\frac{1}{2},r}^{(-1)^{r}} |~~ r\in \mathbb Z\big\}~~\rangle,\\
\label{eq:u5} \mathfrak{A}_5~(\frac{1}{2})&:~~=\langle S_5\rangle=&\langle~~ \big\{  L_{\frac{1}{2},m+\frac{1}{2}}^{r} |~~ r, m\in \mathbb Z\big\}~~\rangle.
\end{eqnarray}

In \cite{Takh}, authors gave the canonical Nambu 3-Lie algebra 
on $\mathbb{R}$.
Let $V=C^{\infty}(\mathbb R^n)$ (or $V=C^{\infty}(\mathbb C^n)$ ). Then  $(V,\{ , \cdots,  \})$ is  a \textbf{canonical Nambu $n$-Lie algebra},  where  for  $\forall f_1,f_2,\cdots,f_n\in V$ with $n$ coordinates $x_1,x_2,\cdots,x_n\in \mathbb R$ ( or $x_1,x_2,\cdots,x_n\in \mathbb C$),
\begin{equation}\label{eq:n-Nambu bracket}
  \{f_1,f_2,\cdots,f_n\}=\frac{\partial(f_1,f_2,\cdots,f_n)}{\partial(x_1,x_2,\cdots,x_n)}.
\end{equation}

\begin{theorem} Let $\mathfrak{L}$ be the 3-Lie Poisson algebra in Theorem \ref{thm:3-Jacobi} over the real field $\mathbb R$ (or in the complex field $\mathbb C$), then $\mathfrak{L}$ can be realized by a canonical Nambu 3-Lie algebra.
\end{theorem}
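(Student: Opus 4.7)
The plan is to construct an explicit injective morphism $\phi$ of 3-Lie Poisson algebras from $\mathfrak{L}$ into the canonical Nambu 3-Lie algebra $C^{\infty}(U)$, where $U$ is an open subset of $\mathbb{R}^{3}$ (resp.\ $\mathbb{C}^{3}$) equipped with coordinates $(x_1, x_2, x_3)$. The driving observation is that, in the bracket \eqref{eq:3-Jacobi bracket}, the superscript $r$ is preserved additively while the subscripts $l$ and $m$ each shift down by $1$: differentiating $e^{r x_1}$ with respect to $x_1$ produces a factor $r$ with no shift in the exponent, whereas differentiating $x_2^{l}$ with respect to $x_2$ produces $l\, x_2^{l-1}$. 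This forces the ansatz
$$\phi(L_{l,m}^{r}) \;:=\; e^{r x_1}\, x_2^{l}\, x_3^{m},$$
extended $\mathbb{F}$-linearly, with $U = \mathbb{R} \times (0,\infty)^{2}$ in the real case (resp.\ a simply connected domain in $\mathbb{C}^{3}$ avoiding the branch loci of $x_2^{1/2}$ and $x_3^{1/2}$ in the complex case) so that the half-integer powers define smooth functions.

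The verification then splits into three short steps. First, compatibility with the associative multiplication is immediate, since $e^{r_1 x_1} x_2^{l_1} x_3^{m_1} \cdot e^{r_2 x_1} x_2^{l_2} x_3^{m_2} = \phi\bigl(L_{l_1+l_2,\, m_1+m_2}^{r_1+r_2}\bigr)$, matching \eqref{eq:muit}. Second, I would compute the Jacobian
$$\frac{\partial\bigl(\phi(L^{r_1}_{l_1,m_1}),\; \phi(L^{r_2}_{l_2,m_2}),\; \phi(L^{r_3}_{l_3,m_3})\bigr)}{\partial(x_1, x_2, x_3)}$$
and extract the row factors $e^{r_i x_1} x_2^{l_i} x_3^{m_i}$ together with the column factors $x_2^{-1}$ and $x_3^{-1}$; what remains is precisely the scalar determinant $M(\alpha_1,\alpha_2,\alpha_3)$, so the Jacobian equals $M(\alpha_1,\alpha_2,\alpha_3)\, \phi\bigl(L_{l_1+l_2+l_3 - 1,\, m_1+m_2+m_3 - 1}^{r_1+r_2+r_3}\bigr)$, which is exactly the image of \eqref{eq:3-Jacobi bracket}. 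Third, injectivity of $\phi$ follows from the linear independence of the family $\{e^{r x_1} x_2^{l} x_3^{m}\}$ over distinct triples $(r, l, m) \in (\mathbb{Z} + \tfrac{1}{2}\mathbb{Z})^{3}$ on any non-empty open subset of $U$.

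The main (mild) obstacle is the choice of $U$ ensuring that the half-integer powers are well-defined and smooth; once this is settled, the whole argument reduces to the bookkeeping determinant computation above, which closely parallels the computation carried out in the proof of Theorem \ref{thm:3-Jacobi}. I do not anticipate any deeper obstructions, since the ansatz is essentially forced (up to trivial rescalings of the coordinates) by matching the shift pattern in $(r, l, m)$ that the 3-bracket imposes.
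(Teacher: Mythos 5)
Your proposal is correct and follows essentially the same route as the paper: the paper sets $X_{l,m}^{r}=y^{l}z^{m}e^{rx}$, checks that the product matches \eqref{eq:muit}, and computes the Jacobian determinant to recover $M(\alpha_1,\alpha_2,\alpha_3)X^{r_1+r_2+r_3}_{l_1+l_2+l_3-1,\,m_1+m_2+m_3-1}$, exactly your ansatz up to renaming the coordinates. Your additional care about the domain (taking $y,z$ in an open set where half-integer powers are smooth) and the explicit linear-independence argument for injectivity are refinements the paper leaves implicit, but they do not change the substance of the argument.
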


\begin{proof} Let
\begin{equation}
X_{l,m}^{r}=y^lz^me^{rx},~~ ~~\forall~ r,l,m\in \mathbb{Z}+\frac{1}{2}\mathbb{Z},
\end{equation}
 where $y, z\in \mathbb R^+ =[0, +\infty], x\in \mathbb R$ (or $x, y, z\in \mathbb C$).
Then  $$B=\sum\limits_{l, m, r\in \mathbb{Z}+\frac{1}{2}\mathbb{Z}}\mathbb R X_{l, m}^r \quad\mbox{  (or} \quad B=\sum\limits_{l, m, r\in \mathbb{Z}+\frac{1}{2}\mathbb{Z}}\mathbb C X_{l, m}^r  )$$ is an associative commutative algebra with the multiplication
  \begin{equation*}
    X_{l_1,m_1}^{r_1}\cdot X_{l_2,m_2}^{r_2}=y^{l_1}z^{m_1}e^{r_1x}\cdot y^{l_2}z^{m_2}e^{r_2x}=y^{l_1+l_2}z^{m_1+m_2}e^{(r_1+r_2)x}=X_{l_1+l_2,m_1+m_2}^{r_1+r_2}.
  \end{equation*}

 Define the 3-Lie multiplication
\begin{equation}\label{eq:Poisson3}
[X_{l_1,m_1}^{r_1},X_{l_2,m_2}^{r_2},X_{l_3,m_3}^{r_3}]= \left|
  \begin{array}{ccc}
 \frac{\partial X_{l_1,m_1}^{r_1}}{\partial x} & \frac{\partial X_{l_2,m_2}^{r_2}}{\partial x} & \frac{\partial X_{l_3,m_3}^{r_3}}{\partial x} \\\\
 \frac{\partial X_{l_1,m_1}^{r_1}}{\partial y} & \frac{\partial X_{l_2,m_2}^{r_2}}{\partial y} & \frac{\partial X_{l_3,m_3}^{r_3}}{\partial y} \\\\
 \frac{\partial X_{l_1,m_1}^{r_1}}{\partial z} & \frac{\partial X_{l_2,m_2}^{r_2}}{\partial z} & \frac{\partial X_{l_3,m_3}^{r_3}}{\partial z} \\
\end{array}
\right|.
\end{equation}

Thanks to Eq \eqref{eq:Jacobian}, $(B, \cdot,[ , , ])$ is a  3-Lie Poisson algebra, and
\begin{equation*}
  \begin{split}
    &[X_{l_1,m_1}^{r_1},X_{l_2,m_2}^{r_2},X_{l_3,m_3}^{r_3}]=[y^{l_1}z^{m_1}e^{r_1x},y^{l_2}z^{m_2}e^{r_2x},y^{l_3}z^{m_3}e^{r_3x}]\\\\
   =&\left|
       \begin{array}{ccc}
         r_1y^{l_1}z^{m_1}e^{r_1x}   & r_2y^{l_2}z^{m_2}e^{r_2x}   & r_3y^{l_3}z^{m_3}e^{r_3x} \\
         l_1y^{l_1-1}z^{m_1}e^{r_1x} & l_2y^{l_2-1}z^{m_2}e^{r_2x} & l_3y^{l_3-1}z^{m_3}e^{r_3x} \\
         m_1y^{l_1}z^{m_1-1}e^{r_1x} & m_2y^{l_2}z^{m_2-1}e^{r_2x} & m_3y^{l_3}z^{m_3-1}e^{r_3x} \\
       \end{array}
     \right|\\
   =&M(\alpha_1,\alpha_2, \alpha_3)y^{l_1+l_2+l_3-1}z^{m_1+m_2+m_3-1}e^{r_1+r_2+r_3}\\
   =&M(\alpha_1,\alpha_2, \alpha_3)X_{l_1+l_2+l_3-1,m_1+m_2+m_3-1}^{r_1+r_2+r_3},\\
  \end{split}
  \end{equation*}
where $\alpha_i=(l_i, m_i, r_i)\in (\mathbb{Z}+\frac{1}{2}\mathbb{Z})^{\otimes 3}, 1\leq i\leq 3$. Therefore, $\chi: \mathfrak{L}\rightarrow B$ defined by, $\chi(L_{l, m}^r)=X_{l, m}^r$, for all $l, m, r\in \mathbb{Z}+\frac{1}{2}\mathbb{Z} $, is an algebra isomorphism between
$\mathfrak{L}$  and $(B, \cdot, [, , ])$.
  \end{proof}

\section{Applications of unital 3-Lie Poisson algebra  $\mathfrak{L}$}
\setcounter{equation}{0}
\renewcommand{\theequation}
{4.\arabic{equation}}

 In this section, we study some applications of $\mathfrak{L}$.  We will prove that four  important 3-Lie algebras:  3-Virasoro-Witt algebra $\mathcal{W}_3$ in \cite{CTDFC}, $A_\omega^\delta$ in \cite{BR-3},  $A_{\omega}$ in \cite{BR-2} and 3-$W_{\infty}$ algebra in \cite{CSAS}
  can be embedded in $\mathfrak{L}$.
\subsection{3-Virasoro-Witt algebra}

 Witt algebra (centerless Virasoro algebra) is an important  complex Lie algebra in two-dimensional conformal field theory and string theory. It has been generalized to higher arties as 3-Virasoro-Witt algebra in \cite{CTDFC}. First  we  show that how to construct  3-Virasoro-Witt algebra by creation and annihilation operators $a^{\dagger}$ and $a$.

We know that $a^{\dagger}$ and $a$ have wide applications in quantum mechanics. Consider the Schr\"{o}dinger equation for the one-dimensional time independent quantum harmonic oscillator
\begin{equation}
\label{eq:digereq} \left(-\frac{\hbar^2}{2m}\frac{d^2}{dx^2}+\frac{1}{2}m\omega^2x^2\right)\psi(x)=E\psi(x).
\end{equation}
Set
$$x=\sqrt{\frac{\hbar}{m\omega}}q,$$
 a direct computation yields that
$$\frac{\hbar\omega}{2}\left(-\frac{d^2}{dq^2}+q^2\right)\psi(q)=E\psi(q),
$$
moreover
\begin{equation}\label{eq:sding}
-{\frac {d^{2}}{dq^{2}}}+q^{2}=\left(-{\frac {d}{dq}}+q\right)\left({\frac {d}{dq}}+q\right)+{\frac {d}{dq}}q-q{\frac {d}{dq}}.
\end{equation}

For arbitrary differentiable function $f(q)$, since
$$\left({\frac {d}{dq}}q-q{\frac {d}{dq}}\right)f(q)={\frac {d}{dq}}(qf(q))-q{\frac {df(q)}{dq}}=f(q),$$
we have
\begin{equation}\label{eq:aa=1}
{\frac {d}{dq}}q-q{\frac {d}{dq}}=1.
\end{equation}
Therefore,  \eqref{eq:sding} can be reduced to
$$-{\frac {d^{2}}{dq^{2}}}+q^{2}=\left(-{\frac {d}{dq}}+q\right)\left({\frac {d}{dq}}+q\right)+1.$$
So the Schr\"{o}dinger equation \eqref{eq:digereq}  becomes
$$\hbar \omega \left[{\frac {1}{\sqrt {2}}}\left(-{\frac {d}{dq}}+q\right){\frac {1}{\sqrt {2}}}\left({\frac {d}{dq}}+q\right)+{\frac {1}{2}}\right]\psi (q)=E\psi (q).$$
Define the creation operator and annihilation operator as
$$a^{\dagger }\ =\ {\frac {1}{\sqrt {2}}}\left(-{\frac {d}{dq}}+q\right),~a ={\frac {1}{\sqrt {2}}}\left({\frac {d}{dq}}+q\right),$$
then the Schr\"{o}dinger equation reduces to
$$\hbar \omega \left(a^{\dagger }a+{\frac {1}{2}}\right)\psi (q)=E\psi (q).$$
It is a simplification of the Schr\"{o}dinger equation.
Furthermore,  set $p=-i{\frac {d}{dq}}$, then  $[q,p]=i$, and
 \begin{equation}\label{eq:a}
 a={\frac {1}{\sqrt {2}}}(q+ip),~~
a^{\dagger }={\frac {1}{\sqrt {2}}}(q-ip).
\end{equation}
Thanks to  \eqref{eq:aa=1} and \eqref{eq:a},
\begin{equation}\label{eq:a+a}
[a,a^{\dagger }]={\frac {1}{2}}[q+ip,q-ip]={\frac {1}{2}}([q,-ip]+[ip,q])={\frac {-i}{2}}([q,p]+[q,p])=1.\end{equation}
Set
\begin{equation}N=a^\dag a,~~~ L_n=-(a^\dag)^n(N+\gamma+n\beta),~~~M_n=(a^\dag)^n,\end{equation}
where $\gamma,\beta$ are parameters. Using the Nambu commutator
\begin{equation}\label{eq:Nambu-commutator}
[A,B,C]=A[B,C]+B[C,A]+C[A,B]=ABC-BAC+CAB-ACB+BCA-CBA,
\end{equation}
and Eq \eqref{eq:a+a}, we have
\begin{equation}\label{eq:bracket}
\left\{
\begin{split}
  &[L_n,L_m,L_k] = \beta(1-\beta)(n-m)(m-k)(n-k)M_{n+m+k}, \\
  &[M_n,L_m,L_k] = (m-k)(L_{n+m+k}+(1-2\beta)nM_{n+m+k}), \\
  &[M_n,M_m,L_k] = (m-n)M_{n+m+k}. \\
  &[M_n,M_m,M_k] = 0.
\end{split}\right.
\end{equation}

Replacing the basis in Eq \eqref{eq:bracket} with
$$Q_n=\frac{1}{\sqrt[4]{\beta(1-\beta)}}L_n,~~~ R_n=\sqrt[4]{\beta(1-\beta)}M_n,$$
 and taking  limit $\beta\rightarrow \infty$, we get the 3-Virasoro-Witt algebra.


\begin{lemma}\cite{CTDFC}\label{defn:3-Witt}
  Let $V$ be a commutative associative algebra with a basis $\{Q_{n},R_{n}\}_{n\in \mathbb{Z}}$, and define 3-ary linear skew-symmetric multiplication ~$[,,]$~on $V$ as follows
  \begin{equation}\label{3-Virasoro-Witt Algebra}
    \begin{cases}
[Q_{k}, Q_{m}, Q_{n}]=(k-m)(m-n)(k-n)R_{k+m+n}, \\
[Q_{p}, Q_{q}, R_{k}]=(p-q)(Q_{k+p+q}+zkR_{k+p+q}), \\
[Q_{p}, R_{q}, R_{k}]=(k-q)R_{k+p+q}, \\
[R_{p}, R_{q}, R_{k}]=0,
\end{cases} \forall~ k, m, n,p,q \in \mathbb{Z},
  \end{equation}
where z is a parameter. Then the multiplication \eqref{3-Virasoro-Witt Algebra} does not satisfy Eq \eqref{eq:Jaco}, except when $z=\pm2\sqrt{-1}$. In that cases, $(V, [,,]) $ is a 3-Lie algebra, which is called \textbf{3-Virasoro-Witt algebra},~and is denoted by $\mathcal{W}_3$.
\end{lemma}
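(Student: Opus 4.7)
My plan is to verify the Fundamental Identity \eqref{eq:Jaco} on basis quintuples. By trilinearity in the first three slots, bilinearity in the last two, and skew-symmetry, it suffices to take $(x_1, x_2, x_3)$ of one of the types $QQQ$, $QQR$, $QRR$, $RRR$ and $(y_2, y_3)$ of one of the types $QQ$, $QR$, $RR$, giving twelve case-pairs. In each case I would expand both sides of \eqref{eq:Jaco} using the defining relations \eqref{3-Virasoro-Witt Algebra}, apply the cyclic identities (for instance $[R_a, Q_b, Q_c] = [Q_b, Q_c, R_a]$) to bring every inner bracket into canonical form, and then match the coefficients of $Q_s$ and $R_s$ separately, where $s = k+m+n+p+q$.

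Whenever the triple is $RRR$ or the pair is $RR$, every bracket that arises is either $[R,R,R] = 0$ or reduces to $[Q,R,R]$, whose output is again $R$-type and feeds into another vanishing or $z$-free bracket. The surviving polynomial discrepancies cancel by elementary index identities such as $(n-m) + (k-n) + (m-k) = 0$, and no constraint on $z$ appears. The case-pairs in which the bracket $[Q,Q,R] = (a-b)(Q + zcR)$ is triggered at most once in total---namely $(QRR; QR)$, $(QRR; QQ)$, $(QQR; RR)$---have all coefficients at most linear in $z$, and both the $z^0$- and $z^1$-parts match as polynomial identities in the five indices, again leaving $z$ unconstrained.

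The decisive cases are those in which $[Q,Q,R]$ can be invoked on both sides of \eqref{eq:Jaco}, producing coefficients quadratic in $z$. The two basic such cases are $(QQR; QQ)$ and $(QQQ; QR)$. In both, matching $Q_s$-coefficients is automatic, reducing to the elementary identity $(k-p)(m-n) - (m-p)(k-n) + (n-p)(k-m) = 0$. Matching the $R_s$-coefficients, on the other hand, collapses after collection of like terms to a relation of the form
\begin{equation*}
P(k, m, n, p, q)\,(z^2 + 4) = 0,
\end{equation*}
with $P$ an explicit polynomial that is generically nonzero (for instance in the case $(QQR; QQ)$ one finds $P = n(k-m)(p-q)(k+m-p-q)$, verifiable numerically at $(k,m,n,p,q) = (2,0,1,0,1)$). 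Thus $z^2 = -4$ is forced, and conversely this value of $z$ resolves the identity for every basis quintuple, so the 3-Virasoro-Witt multiplication is a 3-Lie bracket precisely when $z = \pm 2\sqrt{-1}$.

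The main obstacle is the polynomial bookkeeping in the decisive cases: the three derivation terms on the right-hand side, once reduced to canonical form, contribute several polynomial summands to the $R_s$-coefficient (three $z^0$-cubics and one $z^2$-proportional term), and exhibiting that their combined $z^0$-part together with the left-hand side telescopes into the single polynomial factor $-4P$ requires careful symbolic simplification. Once this common factorization is extracted, the condition $z^2 + 4 = 0$ reads off immediately.
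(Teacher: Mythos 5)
Your proposal is correct in strategy and, as far as I can check, in its decisive computation: in the case $x=(Q_k,Q_m,R_n)$, $y=(Q_p,Q_q)$ the $Q_s$-coefficients do match identically, the $z^2$-part of the $R_s$-discrepancy is exactly $n(k-m)(p-q)(k+m-p-q)$, and the $z^0$-part equals four times that polynomial (I verified this at $(2,0,1,0,1)$ and $(1,0,2,3,5)$), so the fundamental identity in this case fails by precisely $P\,(z^2+4)$ and forces $z=\pm2\sqrt{-1}$. Note, however, that the paper offers no proof of this lemma at all: it is quoted from the Curtright--Fairlie--Zachos reference, so there is nothing to match your argument against line by line. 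The nearest the paper comes is Theorem \ref{W3}, whose computation exhibits elements $-L_{0,1}^{r}+\sqrt{-1}rL_{1,0}^{r}$ and $-L_{1,0}^{r}$ of the Jacobian algebra $\mathfrak{L}$ realizing the relations \eqref{3-Virasoro-Witt Algebra} with $z=2\sqrt{-1}$; combined with Theorem \ref{thm:3-Jacobi} this proves the \emph{sufficiency} of $z=\pm2\sqrt{-1}$ with no case analysis whatsoever, since the bracket is inherited from a genuine 3-Lie algebra. What that route cannot give, and what your twelve-case verification supplies, is the \emph{necessity} claim ("does not satisfy \eqref{eq:Jaco} except when $z=\pm2\sqrt{-1}$"). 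The one thing I would press you on is the assertion that all the $z$-linear cases (and the $z^1$-parts of the mixed cases) match identically: you state this without exhibiting the cancellations, and since a failure there would produce a constraint incompatible with $z^2=-4$, those identities are not optional bookkeeping but part of the proof; they do hold, but they should be displayed or at least reduced to named polynomial identities as you did for the $Q_s$-coefficients.
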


\begin{theorem}\label{W3}  3-Virasoro-Witt algebras $\mathcal{W}_3({z=2\sqrt{-1}})$  and  $\mathcal{W}_3({z=-2\sqrt{-1}})$)  can be embedded in  $\mathfrak{L}$, and which are isomorphic to $\mathfrak{A}_1~(0,1~~;~~1,0)$ and $\mathfrak{A}_2~(0,1~~;1,0~~)$, respectively.
\end{theorem}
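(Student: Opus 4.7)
The plan is to exhibit for each subalgebra $\mathfrak{A}_1$ and $\mathfrak{A}_2$ an explicit linear bijection with the abstract 3-Virasoro-Witt algebra and then verify, using only the determinant formula \eqref{eq:3-Jacobi bracket}, that this bijection intertwines the two 3-brackets. A natural candidate correspondence for $\mathfrak{A}_1$ is
\[
Q_n \longmapsto -L_{0,1}^{n}+\sqrt{-1}\,n\,L_{1,0}^{n}, \qquad R_n \longmapsto \lambda\,L_{1,0}^{n},
\]
for a suitable scalar $\lambda$ (to be pinned down by the $[Q,Q,Q]$ relation), with the analogous map for $\mathfrak{A}_2$ obtained by replacing $\sqrt{-1}$ by $-\sqrt{-1}$. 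Since these images are the generators of $S_1$ and $S_2$ up to scalars, once the 3-Virasoro-Witt relations from Lemma \ref{defn:3-Witt} hold on the image, the generating subalgebra $\mathfrak{A}_i$ is automatically spanned by them and the map is both an isomorphism and an embedding into $\mathfrak{L}$.

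The verification reduces to the four families of relations. The case $[R,R,R]=0$ is immediate: each $L_{1,0}^{r}$ has third component $0$, so the defining determinant $M$ has a zero row. The case $[Q,R,R]$ reduces to a single nonvanishing determinant, because any term arising from the $\sqrt{-1}\,r L_{1,0}^{r}$ piece of $Q$ would contribute three $L_{1,0}$ entries and again produce a zero row; one finds $[Q_p,R_q,R_k] = (k-q)\,\lambda^{2}L_{1,0}^{p+q+k}$. The case $[Q,Q,Q]$ expands into eight terms, of which the all-$L_{0,1}$ and all-$L_{1,0}$ pieces vanish by the same zero-row argument; the remaining six terms split into two symmetric groups, and the $L_{1,0}^{k+m+n}$ coefficient collapses via the classical identity $k^{2}(m-n)+m^{2}(n-k)+n^{2}(k-m)=(k-m)(m-n)(k-n)$, producing exactly the cubic prefactor required by the first 3-Virasoro-Witt relation.

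The most delicate step is the mixed bracket $[Q,Q,R]$. Expanding produces four contributions: one gives an $L_{0,1}^{p+q+k}$ component with coefficient $(q-p)\lambda$; two produce $L_{1,0}^{p+q+k}$ components whose coefficients combine, after the algebraic simplification $p(p-k)-q(q-k)=(p-q)(p+q-k)$, into a single term $\sqrt{-1}(p-q)(p+q-k)\lambda$; and one vanishes by the zero-row argument. Writing the answer in the target form $(p-q)\bigl(\varphi(Q_{p+q+k}) + zk\,\varphi(R_{p+q+k})\bigr)$ and comparing components in the basis $\{L_{0,1}^{p+q+k}, L_{1,0}^{p+q+k}\}$ simultaneously fixes $\lambda$ and extracts the scalar $z$. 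A direct computation yields $z=-2\sqrt{-1}$ on $\mathfrak{A}_1$ and $z=+2\sqrt{-1}$ on $\mathfrak{A}_2$; the rescaling $(Q_n,R_n)\mapsto(\sqrt{-1}\,Q_n,-\sqrt{-1}\,R_n)$ is an isomorphism $\mathcal{W}_3(z)\cong\mathcal{W}_3(-z)$, so either parameter value is realized on either subalgebra, which is what the statement requires.

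Injectivity of $\varphi_i$ is immediate: for each $n$ the pair $\bigl(-L_{0,1}^{n}\pm\sqrt{-1}\,n L_{1,0}^{n},\ L_{1,0}^{n}\bigr)$ is linearly independent in $\mathfrak{L}$, and for distinct $n$ the images lie in distinct $r$-homogeneous components. Surjectivity onto $\mathfrak{A}_i$ then follows because the image contains $S_i$ and is already closed under the 3-bracket by the verifications above, hence equals $\langle S_i\rangle=\mathfrak{A}_i$. The main obstacle in executing the proof is purely bookkeeping: tracking the eight (respectively four) sign-and-$\sqrt{-1}$ laden terms in the $[Q,Q,Q]$ and $[Q,Q,R]$ expansions, computing the relevant $3\times 3$ cofactors correctly, and keeping the scalar $\lambda$ and the parameter $z$ consistent across the four families of relations.
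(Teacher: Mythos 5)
Your proposal is correct and follows the same basic route as the paper: expand the four bracket families of Lemma \ref{defn:3-Witt} on the generators of $S_1$ (resp.\ $S_2$) using the determinant formula \eqref{eq:3-Jacobi bracket}, kill the all-$L_{0,1}$ and all-$L_{1,0}$ terms by the zero-row argument, and collapse the surviving cofactors with the cubic identity; your closure-of-the-span argument for $\langle S_i\rangle=\mathfrak{A}_i$ is also the right way to finish. The one substantive difference is your treatment of the normalization of $R_n$ and of the parameter $z$, and it works in your favour. The paper fixes $\Gamma_1(-L_{1,0}^r)=R_r$, i.e.\ $\lambda=-1$ in your notation; but, as your $[Q,R,R]$ computation shows, consistency forces $\lambda^2=\lambda$, hence $\lambda=1$, and with the paper's choice every nontrivial relation acquires a global minus sign (for instance the computed $[Q,Q,Q]$ equals $(k-m)(m-n)(k-n)L_{1,0}^{k+m+n}=-(k-m)(m-n)(k-n)\Gamma_1^{-1}(R_{k+m+n})$, so $\Gamma_1$ as written intertwines the bracket only up to sign). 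Likewise your value $z=-2\sqrt{-1}$ on $\mathfrak{A}_1$ (and $+2\sqrt{-1}$ on $\mathfrak{A}_2$) is what the correctly normalized map actually produces; the paper arrives at the opposite assignment by reading $z$ off from $(p-q)\bigl((L_{0,1}^{p+q+k}-\sqrt{-1}(p+q+k)L_{1,0}^{p+q+k})+2\sqrt{-1}kL_{1,0}^{p+q+k}\bigr)$ while silently identifying $L_{0,1}^{p+q+k}-\sqrt{-1}(p+q+k)L_{1,0}^{p+q+k}$ with $Q_{p+q+k}$ rather than with $-Q_{p+q+k}$. Your auxiliary observation that $(Q_n,R_n)\mapsto(\sqrt{-1}Q_n,-\sqrt{-1}R_n)$ gives $\mathcal{W}_3(z)\cong\mathcal{W}_3(-z)$ is correct and is exactly the device needed to recover the theorem as stated with either labelling. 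In short, your plan proves the statement and in fact repairs the sign bookkeeping of the published argument; all that remains is to execute the cofactor computations, which agree with the paper's line by line.
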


\begin{proof}
From \eqref{eq:3-Jacobi bracket}  and \eqref{eq:u1}, we have
\begin{equation*}
\begin{split}
  &[-L_{0,1}^{k}+\sqrt{-1}kL_{1,0}^{k},~~-L_{0,1}^{m}+\sqrt{-1}mL_{1,0}^{m},~~-L_{0,1}^{n}+\sqrt{-1}nL_{1,0}^{n}]\\
  =&[L_{0,1}^{k},~~L_{0,1}^{m},~~\sqrt{-1}nL_{1,0}^{n}]+[L_{0,1}^{k},~~\sqrt{-1}mL_{1,0}^{m},~~L_{0,1}^{n}]-[L_{0,1}^{k},~~\sqrt{-1}mL_{1,0}^{m},~~\sqrt{-1}nL_{1,0}^{n}]\\
  +&[\sqrt{-1}kL_{1,0}^{k},~~L_{0,1}^{m},~~L_{0,1}^{n}]-[\sqrt{-1}kL_{1,0}^{k},~~L_{0,1}^{m},~~\sqrt{-1}nL_{1,0}^{n}]-[\sqrt{-1}kL_{1,0}^{k},~~\sqrt{-1}mL_{1,0}^{m},~~L_{0,1}^{n}]\\
  =&\sqrt{-1}n\left|
                \begin{array}{ccc}
                  k & m & n \\
                  0 & 0 & 1 \\
                  1 & 1 & 0 \\
                \end{array}
              \right|L_{0,1}^{k+m+n}+\sqrt{-1}m\left|
                \begin{array}{ccc}
                  k & m & n \\
                  0 & 1 & 0 \\
                  1 & 0 & 1 \\
                \end{array}
              \right|L_{0,1}^{k+m+n}+mn\left|
                \begin{array}{ccc}
                  k & m & n \\
                  0 & 1 & 1 \\
                  1 & 0 & 0 \\
                \end{array}
              \right|L_{1,0}^{k+m+n}\\
  +&\sqrt{-1}k\left|
                \begin{array}{ccc}
                  k & m & n \\
                  1 & 0 & 0 \\
                  0 & 1 & 1 \\
                \end{array}
              \right|L_{0,1}^{k+m+n}+kn\left|
                \begin{array}{ccc}
                  k & m & n \\
                  1 & 0 & 1 \\
                  0 & 1 & 0 \\
                \end{array}
              \right|L_{1,0}^{k+m+n}+km\left|
                \begin{array}{ccc}
                  k & m & n \\
                  1 & 1 & 0 \\
                  0 & 0 & 1 \\
                \end{array}
              \right|L_{1,0}^{k+m+n}\\
  =&\sqrt{-1}(n(m-k)+m(k-n)+k(n-m))L_{1,0}^{k+m+n}\\
  +&(mn(m-n)+kn(n-k)+km(k-m))L_{1,0}^{k+m+n}\\
  =&(k-m)(m-n)(k-n)L_{1,0}^{k+m+n};\\
  &[-L_{0,1}^{p}+\sqrt{-1}pL_{1,0}^{p},~~-L_{0,1}^{p}+\sqrt{-1}pL_{1,0}^{p},~~-L_{1,0}^{k}]\\
  =&-[L_{0,1}^{p},~~L_{0,1}^{q},~~L_{1,0}^{k}]+[L_{0,1}^{p},~~\sqrt{-1}qL_{1,0}^{q},~~L_{1,0}^{k}]+[\sqrt{-1}pL_{1,0}^{p},~~L_{0,1}^{q},~~L_{1,0}^{k}]\\
  =&-\left|
      \begin{array}{ccc}
        p & q & k \\
        0 & 0 & 1 \\
        1 & 1 & 0 \\
      \end{array}
    \right|L_{0,1}^{p+q+k}+q\sqrt{-1}\left|
      \begin{array}{ccc}
        p & q & k \\
        0 & 1 & 1 \\
        1 & 0 & 0 \\
      \end{array}
    \right|L_{1,0}^{p+q+k}+p\sqrt{-1}\left|
      \begin{array}{ccc}
        p & q & k \\
        1 & 0 & 1 \\
        0 & 1 & 0 \\
      \end{array}
    \right|L_{1,0}^{p+q+k}\\
  =&(p-q)L_{0,1}^{p+q+k}+\sqrt{-1}q(q-k)L_{1,0}^{p+q+k}+\sqrt{-1}p(k-p)L_{1,0}^{p+q+k}\\
  =&(p-q)L_{0,1}^{p+q+k}+\sqrt{-1}(q^2+qk-pk-p^2-pq+pq-2qk+2pk)L_{1,0}^{p+q+k}\\
  =&(p-q)((L_{0,1}^{p+q+k}-\sqrt{-1}(p+q+k)L_{1,0}^{p+q+k})+2\sqrt{-1}kL_{1,0}^{p+q+k});
\end{split}
\end{equation*}

\begin{equation*}
\begin{split}
  &[-L_{0,1}^{p}+\sqrt{-1}pL_{1,0}^{p},-L_{1,0}^{q},-L_{1,0}^{k}]=-[L_{0,1}^{p},L_{1,0}^{q},L_{1,0}^{k}]
  =-\left|
      \begin{array}{ccc}
        p & q & k \\
        0 & 1 & 1 \\
        1 & 0 & 0 \\
      \end{array}
    \right|L_{1,0}^{p+q+k}\\
  =&(k-q)L_{1,0}^{p+q+k};\\
  &[-L_{1,0}^{p},~~-L_{1,0}^{q},~~-L_{1,0}^{k}]=0;~~\forall~ k,m,n,p,q\in \mathbb{Z}.\\
\end{split}
\end{equation*}
Thanks to \eqref{3-Virasoro-Witt Algebra}, $\Gamma_1:$  $\mathfrak{A}_1(0,1~~;~~1,0)\rightarrow$ $\mathcal{W}_3(z={2\sqrt{-1}})$ defined by
$$\Gamma_1(-L_{0,1}^{r}+\sqrt{-1}rL_{1,0}^{r})=Q_r, ~~ \Gamma_1(-L_{1,0}^{r})=R_r, ~~ \forall r\in\mathbb Z,$$
 is a 3-Lie algebra isomorphism.  By a similar discussion to the above,   $\Gamma_2:$  $\mathfrak{A}_2(0,1~~;~~1,0)\rightarrow$ $\mathcal{W}_3(z={-2\sqrt{-1}})$
defined as the above is a 3-Lie algebra isomorphism. Therefore, 3-Virasoro-Witt algebras $\mathcal{W}_3(z={2\sqrt{-1}})$  and $\mathcal{W}_3(z={-2\sqrt{-1}})$  can be embedded in  $\mathfrak{L}$.
\end{proof}

\subsection{The 3-Lie algebra $A_\omega^\delta$ constructed  by an involution and a derivation}

In this subsection, we study an infinite-dimensional 3-Lie algebra $A_\omega^\delta$ which is constructed by a
commutative associative algebra,  an involution and a derivation (\cite{BR-3}).

\begin{lemma}\cite{BR-3}\label{defn:Aw2}
 Let $E$ be a commutative associative algebra with a basis $\{S_{r},T_{r}\}_{r\in \mathbb{Z}}.$ Then  $A_\omega^\delta:=(E,[,,])$ is a  simple 3-Lie algebra, where
\begin{equation}\label{eq:Aw2}
\begin{cases}
[S_{l}, S_{m}, S_{n}]=0, \\
[T_{l}, T_{m}, T_{n}]=0, \\
[S_{l}, S_{m}, T_{n}]=(m-l)S_{l+m-n}, \\
[S_{l}, T_{m}, T_{n}]=(n-m)T_{m+n-l},
\end{cases} \forall~ l, m, n \in \mathbb{Z}.
\end{equation}
\end{lemma}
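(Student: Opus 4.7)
The plan has two parts: (I) verify that $(E,[\cdot,\cdot,\cdot])$ satisfies the fundamental identity and hence is a 3-Lie algebra, and (II) prove simplicity.

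For (I), the cleanest route is to piggyback on Theorem~\ref{thm:3-Jacobi}. Define a linear map $\phi:E\to\mathfrak{L}$ by $\phi(S_r)=L_{1,0}^{r}$ and $\phi(T_r)=-L_{0,1}^{-r}$ for all $r\in\mathbb{Z}$. A direct application of \eqref{eq:3-Jacobi bracket} gives $[\phi(S_l),\phi(S_m),\phi(T_n)]=(m-l)L_{1,0}^{l+m-n}=\phi((m-l)S_{l+m-n})$ and $[\phi(S_l),\phi(T_m),\phi(T_n)]=(m-n)L_{0,1}^{l-m-n}=\phi((n-m)T_{m+n-l})$; the pure $S$ and pure $T$ brackets vanish because a row of the determinant $M$ in \eqref{eq:r1l1m1} is identically zero. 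Hence $\phi(E)$ is a subspace of $\mathfrak{L}$ closed under the 3-bracket, and the induced bracket agrees with \eqref{eq:Aw2}. Since $\phi$ is visibly injective and $\mathfrak{L}$ is a 3-Lie algebra, $(E,[\cdot,\cdot,\cdot])$ inherits the 3-Lie structure for free, sidestepping a lengthy direct verification of \eqref{eq:Jaco}.

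For (II), let $I$ be a nonzero ideal and pick $0\neq x=\sum_r a_r S_r+\sum_r b_r T_r\in I$ with finite support. I first separate components: since $[S_0,S_1,S_r]=0$ and $[S_0,S_1,T_r]=S_{1-r}$, the element $\mathrm{ad}(S_0,S_1)(x)=\sum_r b_r S_{1-r}$ lies in $I$, producing a nonzero pure-$S$ vector unless every $b_r$ vanishes, in which case $x$ itself is pure $S$. Next observe that $\mathrm{ad}(S_0,T_0)(S_r)=-r\,S_r$, so on the pure $S$-subspace $\mathrm{ad}(S_0,T_0)$ is diagonal with pairwise distinct integer eigenvalues. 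Iterating this derivation on a nonzero pure-$S$ element and invoking Vandermonde nonsingularity isolates some $S_j\in I$. Once $S_j\in I$, the brackets $[S_j,T_a,T_b]=(b-a)T_{a+b-j}$ recover every $T_n$, and $[T_n,S_l,S_m]=(m-l)S_{l+m-n}$ then recovers every $S_p$, yielding $I=E$.

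The main obstacle is the Vandermonde step in part~(II): one must ensure it still works when $0$ is in the support of the pure-$S$ element (since $\mathrm{ad}(S_0,T_0)$ annihilates $S_0$), but the $k=0$ iterate $x$ itself supplies the missing row for the zero eigenvalue, so distinctness of the support indices is all Vandermonde requires. A self-contained alternative for part~(I), not invoking Theorem~\ref{thm:3-Jacobi}, is a direct case analysis on the number of $T$'s among $(x_1,x_2,x_3,y_2,y_3)$; the all-$S$ and all-$T$ cases are trivial, and each of the four remaining cases reduces to an elementary linear identity in the indices.
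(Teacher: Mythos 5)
Your proposal is correct, but there is nothing in this paper to match it against: the lemma is imported from \cite{BR-3} and the paper offers no proof of it. Taken on its own, your argument is sound. Part (I) is, in effect, the computation the paper carries out later in Theorem \ref{Aw}, read in the opposite direction: you transport the 3-Lie structure of $\mathfrak{L}$ back to $E$ along an injective linear map whose image is closed under the bracket, which legitimately establishes the fundamental identity on $E$ without circularity (the paper's own map $\rho(S_r)=L_{0,1}^{r}$, $\rho(T_r)=L_{1,0}^{-r}$ works without your minus sign, but your variant $\phi(S_r)=L_{1,0}^{r}$, $\phi(T_r)=-L_{0,1}^{-r}$ checks out: the determinants give $(m-l)L_{1,0}^{l+m-n}$ and $(m-n)L_{0,1}^{l-m-n}=\phi((n-m)T_{m+n-l})$ as you claim, and the pure-$S$ and pure-$T$ brackets vanish by a zero row). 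Part (II) has no counterpart anywhere in the paper for this algebra, though it parallels the ideal-chasing style of the paper's Theorem 5.1: the separation step is right since $[S_0,S_1,T_r]=S_{1-r}$ and $r\mapsto 1-r$ is a bijection, so $\mathrm{ad}(S_0,S_1)(x)$ is a nonzero pure-$S$ element unless $x$ already was one; $\mathrm{ad}(S_0,T_0)S_r=-rS_r$ gives distinct integer eigenvalues, so the Vandermonde argument isolates some $S_j\in I$ (and your remark about the zero eigenvalue is the right disposal of the only subtlety); and the final regeneration of all $T_n$ via $[S_j,T_a,T_b]=(b-a)T_{a+b-j}$ and then all $S_p$ is immediate since for any target index one can choose $a\neq b$ (resp.\ $l\neq m$) with the prescribed sum. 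In short: a complete, correct proof of a statement the paper merely cites, with part (I) anticipating the paper's own embedding computation and part (II) supplying what the paper delegates to \cite{BR-3}.
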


\begin{theorem}\label{Aw}
  The linear mapping $\rho:A_\omega^{\delta}\rightarrow \mathfrak{A}_3~(0,1~~;~~1,0)$  defined by
 $$\rho(S_r)= L_{0,1}^{r},~~\rho(T_r)= L_{1,0}^{-r}, \quad \forall r\in \mathbb Z, $$
 is a 3-Lie algebra isomorphism, therefore, 3-Lie  algebra $A_\omega^{\delta}$ can be embedded in  $\mathfrak{L}$.
\end{theorem}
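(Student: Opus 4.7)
The plan is to verify directly that the linear extension of $\rho$ intertwines the 3-bracket of $A_\omega^\delta$ with the 3-bracket on $\mathfrak{L}$ restricted to $\mathfrak{A}_3(0,1;1,0)$, and then argue bijectivity by inspecting the image. Since $A_\omega^\delta$ has a basis $\{S_r,T_r\}_{r\in\mathbb{Z}}$ and the target generating set $S_3=\{L_{0,1}^r,L_{1,0}^{-r}\mid r\in\mathbb{Z}\}$ is in obvious one-to-one correspondence with this basis, $\rho$ extends uniquely to a linear map. Everything then reduces to computing four families of $3\times 3$ determinants $M(\alpha_1,\alpha_2,\alpha_3)$ from \eqref{eq:r1l1m1} in the special form relevant here.

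Concretely, the four cases of \eqref{eq:Aw2} correspond to the following computations via \eqref{eq:3-Jacobi bracket}. For $[S_l,S_m,S_n]$ one obtains
$[L_{0,1}^l,L_{0,1}^m,L_{0,1}^n]$, whose determinant has a row of zeros, giving $0$. Similarly $[T_l,T_m,T_n]$ maps to $[L_{1,0}^{-l},L_{1,0}^{-m},L_{1,0}^{-n}]$, again with a zero row, yielding $0$. For $[S_l,S_m,T_n]$ one evaluates
\begin{equation*}
[L_{0,1}^l,L_{0,1}^m,L_{1,0}^{-n}]
=\left|\begin{array}{ccc} l & m & -n \\ 0 & 0 & 1 \\ 1 & 1 & 0\end{array}\right|
L_{0,1}^{l+m-n}
=(m-l)L_{0,1}^{l+m-n},
\end{equation*}
which matches $\rho((m-l)S_{l+m-n})$. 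Finally, for $[S_l,T_m,T_n]$,
\begin{equation*}
[L_{0,1}^l,L_{1,0}^{-m},L_{1,0}^{-n}]
=\left|\begin{array}{ccc} l & -m & -n \\ 0 & 1 & 1 \\ 1 & 0 & 0\end{array}\right|
L_{1,0}^{l-m-n}
=(n-m)L_{1,0}^{-(m+n-l)},
\end{equation*}
which matches $\rho((n-m)T_{m+n-l})$. Each identity is a single cofactor expansion, and the exponent bookkeeping reduces to the fact that the subscript shift $-1$ in each slot of \eqref{eq:3-Jacobi bracket} exactly cancels the sum $l_1+l_2+l_3=1$ (or $m_1+m_2+m_3=1$) arising from one unit-subscript factor.

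It remains to verify that $\rho$ is a bijection onto $\mathfrak{A}_3(0,1;1,0)$. Injectivity is immediate because $\{L_{0,1}^r,L_{1,0}^{-r}\}_{r\in\mathbb{Z}}$ is a linearly independent subset of the basis of $\mathfrak{L}$, so the linear extension is injective. For surjectivity, the computations above show that every 3-bracket of elements of $S_3$ is again an element of $\mathrm{span}(S_3)$, hence the 3-Lie subalgebra generated by $S_3$ equals $\mathrm{span}(S_3)$, which is exactly the image of $\rho$. Combining the bracket-preservation with bijectivity yields that $\rho$ is a 3-Lie algebra isomorphism, and composition with the inclusion $\mathfrak{A}_3(0,1;1,0)\hookrightarrow\mathfrak{L}$ realizes the desired embedding. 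The only real bookkeeping obstacle is the sign tracking in the determinant expansions and the exponent $-n\mapsto l-m-n$ etc.; everything else is formal.
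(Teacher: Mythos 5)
Your proposal is correct and follows essentially the same route as the paper: a direct evaluation of the four families of determinants $M(\alpha_1,\alpha_2,\alpha_3)$ from \eqref{eq:3-Jacobi bracket} on the images of $S_r$, $T_r$, matched against the relations \eqref{eq:Aw2}. The only difference is that you make explicit the bijectivity step (injectivity from the images being distinct basis vectors of $\mathfrak{L}$, surjectivity from $\mathrm{span}(S_3)$ being closed under the bracket, hence equal to $\langle S_3\rangle=\mathfrak{A}_3$), which the paper leaves implicit; this is a welcome tightening rather than a different argument.
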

\begin{proof}
By Eqs~\eqref{eq:3-Jacobi bracket} and \eqref{eq:u3}, and a direct computation, we have
\begin{equation*}
\begin{split}
  &[\rho(S_{l}), ~~\rho(S_{m}),~~ \rho(T_{n})]=[L_{0,1}^{l},~~L_{0,1}^{m},~~L_{1,0}^{-n}]=\left|
                                                    \begin{array}{ccc}
                                                      l & m & -n \\
                                                      0 & 0 & 1 \\
                                                      1 & 1 & 0 \\
                                                    \end{array}
                                                  \right|L_{0,1}^{l+m-n}=(m-l)L_{0,1}^{l+m-n},\\
  &[\rho(S_{l}), \rho(T_{m}), \rho(T_{n})]=[L_{0,1}^{l},~~L_{1,0}^{-m},~~L_{1,0}^{-n}]=\left|
                                                    \begin{array}{ccc}
                                                      l & -m & -n \\
                                                      0 & 1 & 1 \\
                                                      1 & 0 & 0 \\
                                                    \end{array}
                                                  \right|L_{1,0}^{l-m-n}=(n-m)L_{1,0}^{l-m-n},\\
  &[\rho(T_{l}), \rho(T_{m}), \rho(T_{n})]=[L_{0,1}^{l},~~L_{0,1}^{m},~~L_{0,1}^{n}]=0,\\
  &[\rho(S_{l}), \rho(S_{m}), \rho(T_{n})]=[L_{1,0}^{-l},~~L_{1,0}^{-m},~~L_{1,0}^{-n}]=0, ~~\forall~ l,m,n\in \mathbb{Z}.
\end{split}
\end{equation*}
Thanks to  Lemma \ref{defn:Aw2},
 \begin{equation*}
   \begin{split}
&\rho([S_{l}, S_{m}, S_{n}])=0, ~~\rho([T_{l}, T_{m}, T_{n}])=0, \\
&\rho([S_{l}, S_{m}, T_{n}])=(m-l)\rho(S_{l+m-n})=(m-l)L_{1,0}^{l+m-n}, \\
&\rho([S_{l}, T_{m}, T_{n}])=(n-m)\rho(T_{m+n-l})=(n-m)L_{0,1}^{l-m+n},~~~\forall~ l, m, n \in \mathbb{Z}.
    \end{split}
 \end{equation*}
\newpage
Then  we have
 \begin{equation*}
   \begin{split}
&\rho([S_{l}, S_{m}, S_{n}])=[\rho(S_{l}), \rho(S_{m}), \rho(S_{n}]), ~~\rho([T_{l}, T_{m}, T_{n}])=[\rho(T_{l}), \rho(T_{m}), \rho(T_{n}]), \\
&\rho([S_{l}, S_{m}, T_{n}])=[\rho(S_{l}), \rho(S_{m}), \rho(T_{n}]), \rho([S_{l}, T_{m}, T_{n}])=[\rho(S_{l}), \rho(T_{m}), \rho(T_{n}]).
    \end{split}
 \end{equation*}
 Therefore,  3-Lie algebra $A_\omega^{\delta}$  can be embedded in $\mathfrak{L}$.
\end{proof}

\subsection{3-Lie algebra $A_{\omega}$ constructed by Laurent polynomials}

Now we study the  infinite dimensional 3-Lie algebra $A_{\omega}$ which is constructed by Laurent polynomials in \cite{BR-2}

\begin{lemma}\cite{BR-2}\label{defn:Aw}
  Let U be a vector space with a basis $\{U_n \}_{n\in \mathbb{Z}}$ over $\mathbb F$. Then $U$ is a simple 3-Lie algebra with  the multiplication
\begin{equation}\label{eq:Aw1}
  [U_l,U_m,U_n]=\left|
                  \begin{array}{ccc}
                    (-1)^l & (-1)^m & (-1)^n \\
                    1      & 1      & 1 \\
                    l      & m      & n \\
                  \end{array}
                \right|U_{l+m+n-1},\forall~ l,m,n\in\mathbb{Z}.
\end{equation}
The 3-Lie algebra $(U,[,,])$ is denoted by $A_{\omega}$.
\end{lemma}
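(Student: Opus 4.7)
The plan is to deduce Lemma \ref{defn:Aw} from what is already known about $\mathfrak{L}$. My first step is to embed $(U,[,,])$ into the 3-Lie algebra $\mathfrak{L}$ of Theorem \ref{thm:3-Jacobi}, which will yield skew-symmetry and the fundamental identity for free; the second is a short ideal-chase to establish simplicity.

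For the realization I define $\phi\colon U\to\mathfrak{L}$ on the basis by $\phi(U_n)=\sqrt{2}\,L_{\frac{1}{2},n}^{(-1)^n}$, which is manifestly injective since the triples $(\tfrac12,n,(-1)^n)$ are distinct as $n$ varies. Applying \eqref{eq:3-Jacobi bracket} to $[\phi(U_l),\phi(U_m),\phi(U_n)]$ yields $(\sqrt{2})^3 M(\alpha_1,\alpha_2,\alpha_3)\,L_{\frac{1}{2},\,l+m+n-1}^{(-1)^l+(-1)^m+(-1)^n}$ (using $3\cdot\tfrac12-1=\tfrac12$). A short parity case-split reconciles this with \eqref{eq:Aw1}: when $l,m,n$ share a common parity the first two rows of $M$ are proportional, so $M=0$, and the determinant in \eqref{eq:Aw1} also vanishes for the same reason; in the mixed-parity cases the elementary identity $(-1)^l+(-1)^m+(-1)^n=(-1)^{l+m+n-1}$ holds, so the right-hand side lies back in the image of $\phi$. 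The scaling $\sqrt{2}$ is forced by the relation $c^3=2c$, compensating for the factor $2$ picked up by pulling $\tfrac12$ out of the middle row of $M$. The fundamental identity for $[,,]$ on $U$ then descends from the known FI on $\mathfrak{L}$.

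For simplicity, take $0\neq I\subseteq U$ and $0\neq x=\sum_{n\in F}c_nU_n\in I$ with $|F|$ minimal. The key observation is that $[U_p,U_q,U_n]=0$ whenever $p,q,n$ share a parity, while $[U_p,U_q,U_n]=2(q-p)\,U_{p+q+n-1}$ when $p,q$ are both even and $n$ odd (and analogously with parities swapped). Bracketing $x$ with two even generators of distinct values therefore projects $x$ onto its odd-index part (translated and rescaled by a nonzero scalar), so minimality forces $F$ to lie in a single parity class. A second bracketing, with $p\in F$ and $q$ of the opposite parity, produces an ideal element whose $n=p$ term vanishes and hence has strictly smaller support unless $|F|=1$, giving $U_k\in I$ for some $k$. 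Iterated brackets $[U_k,U_r,U_s]$ with $r,s$ of suitable parities then shift the index freely over $\mathbb{Z}$, so $U_m\in I$ for every $m$. The main obstacle will be the parity bookkeeping in this last shift: to guarantee that all $m\in\mathbb{Z}$ are reachable from a fixed $k$, one has to split according to the parity of $k$ and verify that the relevant non-vanishing $2\times 2$ minors of $M$ indeed scan every residue modulo $2$.
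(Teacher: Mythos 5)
The paper offers no proof of this lemma at all---it is imported verbatim from \cite{BR-2}---so your argument cannot be matched line-by-line against anything in the text; what you give is a correct, self-contained proof, and the useful comparison is with how the paper deploys the lemma. Your embedding $\phi(U_n)=\sqrt{2}\,L_{\frac12,n}^{(-1)^n}$ is, up to the name of the map, exactly the computation the paper performs in the theorem immediately \emph{following} the lemma (where $\tau(U_r)=\sqrt{2}\,L_{\frac12,r}^{(-1)^r}$ is shown to intertwine the brackets); the difference is logical order: you run that computation first and pull back skew-symmetry and the fundamental identity from Theorem \ref{thm:3-Jacobi}, whereas the paper assumes the lemma and uses the same computation only to exhibit $A_\omega$ inside $\mathfrak{L}$. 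Your parity bookkeeping is right: in the all-same-parity case both determinants vanish because their first two rows are proportional, and otherwise $(-1)^l+(-1)^m+(-1)^n=(-1)^{l+m+n-1}$, so the span of the image is closed under the bracket and $\phi$ is an isomorphism onto a subalgebra. (One pedantic caveat: $\sqrt{2}$ must lie in $\mathbb F$ for $\phi$ to exist; if not, extend scalars---the fundamental identity is a system of integer polynomial identities in the structure constants, so it descends to $\mathbb F$, and your simplicity argument never uses the embedding anyway.) The simplicity proof is entirely your own addition relative to this paper, and it works; the only blemish is a harmless sign slip, since for $p,q$ even and $n$ odd one gets $[U_p,U_q,U_n]=2(p-q)U_{p+q+n-1}$ rather than $2(q-p)$, which does not affect the nonvanishing you need. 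The final index-shifting step does require the mod-$2$ case check you flag, but both cases go through: from $U_k$ with $k$ even, an auxiliary pair of two distinct even indices together with the forced odd third index reaches every even target, while a pair of two distinct odd indices reaches every odd target, and symmetrically for $k$ odd.
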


\begin{theorem}
The linear mapping $\tau:A_{\omega}\rightarrow \mathfrak{A}_4~(\frac{1}{2}),$ defined by
$$\tau(U_{r})= \sqrt{2}L_{\frac{1}{2},r}^{(-1)^{r}}, ~~ \forall r\in \mathbb{Z},$$ is a 3-Lie algebra isomorphism, therefore, the 3-Lie algebra $A_{\omega}$ can be embedded in  $\mathfrak{L}$.
\end{theorem}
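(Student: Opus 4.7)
The plan is to verify directly that $\tau$ is a 3-Lie algebra homomorphism, following the strategy already used in the proofs of Theorems \ref{W3} and \ref{Aw}. Writing $\alpha_r = (\tfrac{1}{2}, r, (-1)^r) \in (\mathbb{Z}+\tfrac{1}{2}\mathbb{Z})^3$, I would first apply \eqref{eq:3-Jacobi bracket} to obtain
\[
[L_{\frac{1}{2}, l}^{(-1)^l}, L_{\frac{1}{2}, m}^{(-1)^m}, L_{\frac{1}{2}, n}^{(-1)^n}] = M(\alpha_l, \alpha_m, \alpha_n)\, L_{\frac{1}{2},\, l+m+n-1}^{(-1)^l + (-1)^m + (-1)^n},
\]
and pull $\tfrac{1}{2}$ out of the second row of $M$ to get
\[
M(\alpha_l, \alpha_m, \alpha_n) = \frac{1}{2}\det\begin{pmatrix} (-1)^l & (-1)^m & (-1)^n \\ 1 & 1 & 1 \\ l & m & n \end{pmatrix},
\]
which is precisely $\tfrac{1}{2}$ times the determinant coefficient appearing in \eqref{eq:Aw1}. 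Combined with the cube $(\sqrt{2})^3 = 2\sqrt{2}$ of the scaling factor introduced by $\tau$ on the left-hand side and the single $\sqrt{2}$ factor on $\tau(U_{l+m+n-1})$ on the right, the scalar prefactors would balance out to exactly match $\tau([U_l, U_m, U_n])$.

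The main obstacle is reconciling the superscript $(-1)^l + (-1)^m + (-1)^n$ produced by the bracket in $\mathfrak{A}$ with the superscript $(-1)^{l+m+n-1}$ carried by $\tau(U_{l+m+n-1})$; these live a priori in different basis vectors of $\mathfrak{L}$. I would dispose of this by a short parity case analysis. If $l, m, n$ all share the same parity, then the first row of the above determinant equals $\pm(1,1,1)$ and is proportional to the middle row, so $M(\alpha_l, \alpha_m, \alpha_n) = 0$ and both sides of the desired identity vanish, matching $[U_l, U_m, U_n] = 0$ in $A_\omega$. Otherwise exactly one index has parity opposite to the other two, and a direct check gives $(-1)^l + (-1)^m + (-1)^n = \pm 1 = (-1)^{l+m+n-1}$ with consistent sign, so the two bracketed basis vectors genuinely coincide.

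The same parity observation shows that the set $S_4$ is closed, up to scalars, under the 3-bracket of $\mathfrak{A}$, hence $\mathfrak{A}_4(\tfrac{1}{2}) = \langle S_4 \rangle$ coincides with the $\mathbb{F}$-linear span of $S_4$. Injectivity of $\tau$ is then immediate, since $\{L_{\frac{1}{2}, r}^{(-1)^r} : r \in \mathbb{Z}\}$ is part of the basis of $\mathfrak{L}$, and surjectivity onto $\mathfrak{A}_4(\tfrac{1}{2})$ follows from the previous sentence. Hence $\tau$ is a 3-Lie algebra isomorphism $A_\omega \cong \mathfrak{A}_4(\tfrac{1}{2})$, and $A_\omega$ embeds in $\mathfrak{L}$.
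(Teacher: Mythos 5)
Your proposal is correct and follows essentially the same route as the paper: a direct computation of the bracket via \eqref{eq:3-Jacobi bracket}, factoring $\tfrac{1}{2}$ out of the middle row of the determinant so that, together with $(\sqrt{2})^3=2\sqrt{2}$, the scalar prefactors reduce to the coefficient in \eqref{eq:Aw1}. Your parity case analysis justifying the identification $L_{\frac{1}{2},l+m+n-1}^{(-1)^l+(-1)^m+(-1)^n}=L_{\frac{1}{2},l+m+n-1}^{(-1)^{l+m+n-1}}$ (as terms of the sum, the coefficient vanishing when all three parities agree) is a needed detail that the paper's proof asserts without comment, as is your observation that $\langle S_4\rangle$ coincides with the linear span of $S_4$, which gives bijectivity.
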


\begin{proof}
Thanks to Eqs~\eqref{eq:3-Jacobi bracket} and \eqref{eq:u3},
\begin{equation*}
\begin{split}
  &[\tau(U_l),~~\tau(U_m),~~\tau(U_n)] =[\sqrt{2}L_{\frac{1}{2},l}^{(-1)^{l}},~~\sqrt{2}L_{\frac{1}{2},m}^{(-1)^{m}},~~\sqrt{2}L_{\frac{1}{2},n}^{(-1)^{n}}]\\
  =&2\sqrt{2}\left|
      \begin{array}{ccc}
        (-1)^l      & (-1)^m      & (-1)^n \\
        \frac{1}{2} & \frac{1}{2} & \frac{1}{2} \\
        l           & m           & n \\
      \end{array}
    \right|L_{\frac{1}{2},l+m+n-1}^{(-1)^l+(-1)^m+(-1)^n}\\
  =&\left|
      \begin{array}{ccc}
        (-1)^l  & (-1)^m & (-1)^n \\
        1       & 1      & 1 \\
        l       & m      & n \\
      \end{array}
    \right|\sqrt{2}L_{\frac{1}{2},l+m+n-1}^{(-1)^l+(-1)^m+(-1)^n}\\
    =&\left|
      \begin{array}{ccc}
        (-1)^l  & (-1)^m & (-1)^n \\
        1       & 1      & 1 \\
        l       & m      & n \\
      \end{array}
    \right|\sqrt{2}L_{\frac{1}{2},l+m+n-1}^{(-1)^{l+m+n-1}},~~\forall~ l,m,n\in \mathbb{Z}.\\
\end{split}
\end{equation*}
By Lemma \ref{defn:Aw},
\begin{equation*}
\begin{split}
  \tau([U_l,~~U_m,~~U_n])&=\left|
                  \begin{array}{ccc}
                    (-1)^l & (-1)^m & (-1)^n \\
                    1      & 1      & 1 \\
                    l      & m      & n \\
                  \end{array}
                \right|\tau(U_{l+m+n-1})\\
   &=\left|
      \begin{array}{ccc}
        (-1)^l  & (-1)^m & (-1)^n \\
        1       & 1      & 1 \\
        l       & m      & n \\
      \end{array}
    \right|\sqrt{2}L_{\frac{1}{2},l+m+n-1}^{(-1)^{l+m+n-1}}.
\end{split}
\end{equation*}
It follows
\begin{equation*}
  \tau([U_l,~~U_m,~~U_n])=[\tau(U_l),~~\tau(U_m), ~~\tau(U_n)],
\end{equation*}
 $\tau$ is a 3-Lie algebra isomorphism, therefore, the 3-Lie algebra $A_{\omega}$ can be embedded in  $\mathfrak{L}$.
\end{proof}

\vspace{2mm}\subsection{3-$W_{\infty}$ algebra}
The algebra  $W_{\infty}$ is a higher-spin extension of the Virasoro algebra (\cite{Bakas}). In \cite{CSAS},  authors  obtained a 3-$W_{\infty}$ algebra by using ``lone-star" product and commutative
relations of generators in  $W_{\infty}$ and  appropriate double scaling limits on the generators.
\begin{lemma}\cite{CSAS}\label{defn:3-W00}
   Let $W$ be a commutative associative algebra with a basis $\{W_{m}^{r}\}_{r,m\in \mathbb{Z}}.$ Then  $(W,[,,])$ is a 3-Lie algebra with the multiplication
   \begin{equation}\label{3-Woo}
     [W_{m_1}^{r_1},W_{m_2}^{r_2},W_{m_3}^{r_3}]=\left|
                                                   \begin{array}{ccc}
                                                     1 & 1 & 1 \\
                                                     m_1 & m_2 & m_3 \\
                                                     r_1 & r_2 & r_3 \\
                                                   \end{array}
                                                 \right|W_{m_1+m_2+m_3+1}^{r_1+r_2+r_3},\forall~ r_1,r_2,r_3,m_1,m_2,m_3\in\mathbb{Z},
   \end{equation}
   which is called \textbf{3-$W_{\infty}$ algebra}, and is simply denoted by 3-$W_{\infty}$.
\end{lemma}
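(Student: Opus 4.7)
The plan is to deduce this lemma from Theorem~\ref{thm:3-Jacobi} by realizing $3$-$W_\infty$ as a subalgebra of $\mathfrak{L}$, rather than verifying the Fundamental Identity directly on $W$. The key observation is that if the first index ``$l$'' in $\mathfrak{L}$ is frozen to one constant value, the $l$-row of the defining $3\times 3$ determinant $M$ in \eqref{eq:3-Jacobi bracket} becomes constant and can be normalized away, leaving a $2\times 2$ residue whose shape matches the determinant appearing in \eqref{3-Woo}.

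Concretely, I would define the linear map $\varphi\colon W\to \mathfrak{L}$ by $\varphi(W_m^r)=\sqrt{2}\,L_{1/2,\,m+1}^{\,r}$ for all $m,r\in\mathbb Z$. The parameters $l_0=\tfrac12$, the shift $a=1$, and the scalar $c=\sqrt{2}$ are forced by three degree-matching conditions: $3l_0-1=l_0$ (so the output of a triple bracket stays in the fixed $l_0$-slice), $3a-1=1+a$ (so that the $m$-index of the bracket of three images matches the image of $W_{m_1+m_2+m_3+1}^{r_1+r_2+r_3}$), and $c^3\cdot l_0=c$ (so that the scalars balance).

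To verify that $\varphi$ preserves the $3$-ary bracket, I would apply \eqref{eq:3-Jacobi bracket} to $\varphi(W_{m_i}^{r_i})$: since the $l$-row is $(\tfrac12,\tfrac12,\tfrac12)$, subtracting $2$ times this row from the $m$-row removes the shifts $+1$ without changing the determinant; factoring $\tfrac12$ out of the $l$-row and performing two row swaps then converts $M$ into $\tfrac12$ times $\det\bigl(\begin{smallmatrix}1&1&1\\ m_1&m_2&m_3\\ r_1&r_2&r_3\end{smallmatrix}\bigr)$. The target basis element is $L_{1/2,\,m_1+m_2+m_3+2}^{\,r_1+r_2+r_3}$, and the overall scalar is $(\sqrt{2})^3\cdot\tfrac12=\sqrt{2}$, yielding exactly $\varphi\bigl([W_{m_1}^{r_1},W_{m_2}^{r_2},W_{m_3}^{r_3}]\bigr)$ by the definitions.

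Since $\mathfrak{L}$ satisfies the Fundamental Identity by Theorem~\ref{thm:3-Jacobi} and the image $\varphi(W)=\mathrm{span}_{\mathbb F}\{L_{1/2,\,m+1}^{\,r}:m,r\in\mathbb Z\}$ is closed under the bracket (the computation above shows the bracket of three generators lies in $\varphi(W)$), the map $\varphi$ is a $3$-Lie algebra isomorphism onto a subalgebra of $\mathfrak{L}$, and the Fundamental Identity for $W$ follows by pull-back. The only delicate point is pinpointing the parameters $(l_0,a,c)=(\tfrac12,1,\sqrt{2})$; once these are in hand the verification reduces to elementary row operations on a $3\times 3$ determinant, with no five-variable computation needed as would be the case for a direct check of \eqref{eq:Jaco} on $W$.
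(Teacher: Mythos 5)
Your argument is correct, but there is nothing in the paper to compare it against: Lemma \ref{defn:3-W00} is quoted from \cite{CSAS} without proof, so your embedding argument is a genuine addition rather than a variant of an existing derivation. Your route---transport the bracket \eqref{3-Woo} into $\mathfrak{L}$ along an injective linear map and pull the Fundamental Identity \eqref{eq:Jaco} back from Theorem \ref{thm:3-Jacobi}---is sound and non-circular, since Theorem \ref{thm:3-Jacobi} is proved independently by direct computation. The degree-matching conditions $3l_0-1=l_0$, $3a-1=1+a$ and $c^2l_0=1$ do force $(l_0,a,c)=(\tfrac12,1,\sqrt2)$; the row operations on $M$ are as you describe (the two row swaps form an even permutation, so no sign appears); and the two points you leave implicit---skew-symmetry of \eqref{3-Woo} and injectivity of $\varphi$---are immediate from the determinant form and from the fact that $\varphi$ sends basis vectors to distinct scaled basis vectors, though both deserve a sentence since ``3-Lie algebra'' requires antisymmetry and the pull-back of \eqref{eq:Jaco} requires injectivity. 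One observation worth recording: your shift $m\mapsto m+1$ differs from the map $\phi(W_m^r)=\sqrt2\,L_{\frac12,\,m+\frac12}^{r}$ used in the paper's subsequent embedding theorem for $3$-$W_\infty$ (built on the generating set $S_5$ of \eqref{eq:u5}); for that choice the $m$-degree count gives $3\cdot\tfrac12-1=\tfrac12\neq 1+\tfrac12$, so the bracket of three images lands on $L_{\frac12,\,m_1+m_2+m_3+\frac12}^{r_1+r_2+r_3}$ rather than on $\phi(W_{m_1+m_2+m_3+1}^{r_1+r_2+r_3})$; your parameters repair this index mismatch, and the only price of your approach over a direct five-variable verification of \eqref{eq:Jaco} on $W$ is the dependence on Theorem \ref{thm:3-Jacobi}.
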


\begin{theorem}
The linear mapping $\phi:3\textrm{-}W_{\infty}\rightarrow \mathfrak{A}_5~(\frac{1}{2})$ defined by
$$\phi(W_{m}^{r})= \sqrt{2}L_{\frac{1}{2},m+\frac{1}{2}}^{r}, \quad \forall m,r\in \mathbb{Z},$$
is a 3-Lie algebra isomorphism. Therefore, 3-$W_{\infty}$ can be embedded in  $\mathfrak{L}$.
\end{theorem}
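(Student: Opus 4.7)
My plan is to mimic the template used for the preceding embedding results: verify that $\phi$ respects the $3$-bracket on basis triples, then conclude that $\phi$ is a $3$-Lie algebra isomorphism because it sends the basis $\{W_{m}^{r}\}_{m,r\in\mathbb{Z}}$ of $3$-$W_{\infty}$ bijectively (up to the scalar $\sqrt{2}$) onto the generating set $S_{5}$ of $\mathfrak{A}_{5}(\tfrac{1}{2})$. The embedding $3$-$W_{\infty}\hookrightarrow\mathfrak{L}$ is then immediate since $\mathfrak{A}_{5}(\tfrac{1}{2})$ is by definition a $3$-Lie subalgebra of $\mathfrak{A}$, the $3$-Lie algebra underlying $\mathfrak{L}$.

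To check bracket preservation, I would compute both $\phi([W_{m_{1}}^{r_{1}},W_{m_{2}}^{r_{2}},W_{m_{3}}^{r_{3}}])$ and $[\phi(W_{m_{1}}^{r_{1}}),\phi(W_{m_{2}}^{r_{2}}),\phi(W_{m_{3}}^{r_{3}})]$ directly on basis triples. The second expression, obtained from \eqref{eq:3-Jacobi bracket} applied to $\alpha_{i}=(\tfrac{1}{2},m_{i}+\tfrac{1}{2},r_{i})$, carries the prefactor $(\sqrt{2})^{3}M(\alpha_{1},\alpha_{2},\alpha_{3})$; the first, obtained from \eqref{3-Woo} and the definition of $\phi$, carries the prefactor $\sqrt{2}\,D$, where $D$ denotes the $3$-$W_{\infty}$ determinant in \eqref{3-Woo}. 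The key algebraic step is to reduce $M(\alpha_{1},\alpha_{2},\alpha_{3})$ to $D$: factor $\tfrac{1}{2}$ out of the row of $\tfrac{1}{2}$'s; eliminate the $+\tfrac{1}{2}$ shifts in the third row by subtracting $\tfrac{1}{2}$ times the resulting row of $1$'s; and cyclically permute the three rows (an even permutation, hence no sign change) to bring the $r_{i}$ row to the bottom. This gives $M(\alpha_{1},\alpha_{2},\alpha_{3})=(1/2)D$, so the scalar coefficients on the two sides are each $\sqrt{2}\,D$.

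The main technical obstacle, as in the proofs of Theorems~\ref{W3} and \ref{Aw}, is bookkeeping rather than anything conceptual: the bracket in $\mathfrak{L}$ shifts the lower $m$-index by $-1$ while the $3$-$W_{\infty}$ bracket shifts it by $+1$, so one must carefully track the $\tfrac{1}{2}$-offsets that $\phi$ introduces on each side and match the resulting basis element of $\mathfrak{L}$ on both sides of the equation. Beyond this and the routine row operations on the determinant, the proof is entirely mechanical and introduces no new techniques beyond those used in the three preceding embedding arguments.
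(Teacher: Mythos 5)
Your approach is the same as the paper's: evaluate both sides of $\phi([x,y,z])=[\phi(x),\phi(y),\phi(z)]$ on basis triples and reduce the determinant $M(\alpha_1,\alpha_2,\alpha_3)$ with $\alpha_i=(\tfrac12,m_i+\tfrac12,r_i)$ to $\tfrac12 D$ by exactly the row operations you describe; that part is correct and produces the common scalar $\sqrt2\,D$, just as in the printed proof. The problem is the step you defer as ``bookkeeping'': if you actually match the basis elements, the two sides do not agree. By \eqref{eq:3-Jacobi bracket} the bracket in $\mathfrak L$ lands on
\begin{equation*}
L_{\frac12,\,(m_1+\frac12)+(m_2+\frac12)+(m_3+\frac12)-1}^{r_1+r_2+r_3}=L_{\frac12,\,m_1+m_2+m_3+\frac12}^{r_1+r_2+r_3},
\end{equation*}
whereas by \eqref{3-Woo} and the definition of $\phi$,
\begin{equation*}
\phi\big(W_{m_1+m_2+m_3+1}^{r_1+r_2+r_3}\big)=\sqrt2\,L_{\frac12,\,(m_1+m_2+m_3+1)+\frac12}^{r_1+r_2+r_3}=\sqrt2\,L_{\frac12,\,m_1+m_2+m_3+\frac32}^{r_1+r_2+r_3}.
\end{equation*}
The second subscripts differ by $1$, so $\phi$ as defined does not preserve the bracket. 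You are in good company: the paper's own proof commits the same slip in its final displayed line, where $\sqrt2\,L_{\frac12,m_1+m_2+m_3+\frac12}^{r_1+r_2+r_3}$ is silently identified with $\phi(W_{m_1+m_2+m_3+1}^{r_1+r_2+r_3})$.

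The obstruction is structural, not a sign or scalar issue. For an affine reindexing $\phi(W_m^r)=c\,L_{\frac12,\alpha m+\beta}^{r}$ to intertwine the $+1$ shift in \eqref{3-Woo} with the $-1$ shift in \eqref{eq:3-Jacobi bracket}, one needs $3\beta-1=\alpha+\beta$, i.e.\ $\alpha=2\beta-1$; the paper's choice $\alpha=1$, $\beta=\tfrac12$ violates this. Choices such as $\alpha=1$, $\beta=1$, $c=\sqrt2$ (or $\alpha=-1$, $\beta=0$, $c^2=-2$) do make both the indices and the scalars match, but their images are not the span of $S_5$. So either the map $\phi$ together with the subalgebra $\mathfrak{A}_5(\tfrac12)$, or the bracket in Lemma \ref{defn:3-W00}, must be adjusted before the argument can be completed; as written, the matching step you labeled mechanical is the step that fails.
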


\begin{proof}
From  ~\eqref{eq:3-Jacobi bracket} and \eqref{eq:u5}, and Lemma.\ref{defn:3-W00}, for $\forall m_1,m_2,m_3,r_1,r_2,r_3\in \mathbb{Z},$
\begin{equation*}
\begin{split}
 & [\phi(W_{m_1}^{r_1}), ~~\phi(W_{m_2}^{r_2}), ~~\phi(W_{m_3}^{r_3})]  =[L_{\frac{1}{2},m_1+\frac{1}{2}}^{r_1},~~L_{\frac{1}{2},m_2+\frac{1}{2}}^{r_2}, ~~L_{\frac{1}{2},m_3+\frac{1}{2}}^{r_3}]\\
   =&2\sqrt{2}\left|
      \begin{array}{ccc}
        r_1 & r_2 &  r_3\\
        \frac{1}{2} & \frac{1}{2} & \frac{1}{2} \\
        m_1+\frac{1}{2} & m_2+\frac{1}{2} & m_3+\frac{1}{2} \\
      \end{array}
    \right|L_{\frac{1}{2},m_1+m_2+m_3+\frac{1}{2}}^{r_1+r_2+r_3}\\
  =&\left|
       \begin{array}{ccc}
          1 & 1 & 1 \\
         m_1 & m_2 & m_3 \\
          r_1 & r_2 & r_3 \\
         \end{array}
       \right|\sqrt{2}L_{\frac{1}{2},m_1+m_2+m_3+\frac{1}{2}}^{r_1+r_2+r_3}\\
       =&\left|
                                                   \begin{array}{ccc}
                                                     1 & 1 & 1 \\
                                                     m_1 & m_2 & m_3 \\
                                                     r_1 & r_2 & r_3 \\
                                                   \end{array}
                                                 \right|\phi(W_{m_1+m_2+m_3+1}^{r_1+r_2+r_3})
                                                  =\phi([W_{m_1}^{r_1},W_{m_2}^{r_2},W_{m_3}^{r_3}]),
\end{split}
\end{equation*}
 therefore, the 3-$W_{\infty}$ algebra is isomorphic to $\mathfrak{A}_5~(\frac{1}{2})$ and 3-$W_{\infty}$ algebra can be embedded in  $\mathfrak{L}$.
\end{proof}

\section{Structure of unital 3-Lie Poisson algebra  $\mathfrak{L}$}
\setcounter{equation}{0}
\renewcommand{\theequation}
{5.\arabic{equation}}

 From \eqref{eq:unit} and \eqref{eq:3-Jacobi bracket},  $L_{0,0}^{0}$ is the center of $\mathfrak{L}$, that is, $[L_{0,0}^{0}, \mathfrak{L}, \mathfrak{L} ]=0$ and  $L_{0,0}^{0}\cdot L_{l, m}^r=L_{l, m}^r\cdot L_{0, 0}^0$ for all $l, m, r\in \mathbb Z+\frac{1}{2}\mathbb Z$. Denotes $C_0=\mathbb F L_{0,0}^{0}$. ~Thanks to \eqref{eq:unit}, and $[L_{0,0}^{0},\mathfrak{A},\mathfrak{A}]=0$,~~$C_0$~is an ideal of~$\mathfrak{L}$.

\begin{theorem} The quotient 3-Lie algebra
$\mathfrak{A}~/C_0$ is an infinite dimensional simple 3-Lie algebra.
\end{theorem}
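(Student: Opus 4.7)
The plan is to take an arbitrary nonzero ideal $\bar{I}\subseteq \mathfrak{A}/C_{0}$, pick any nonzero $\bar{x}\in \bar{I}$, and show that the ideal generated by $\bar{x}$ is all of $\mathfrak{A}/C_{0}$. Non-triviality of the $3$-bracket on the quotient is witnessed by $[L_{1,0}^{0},L_{0,1}^{0},L_{0,0}^{1}]=L_{0,0}^{1}$, so it suffices to rule out proper ideals. The argument splits into two steps: first, reduce $\bar{x}$ to a single basis vector by iterated bracketing; second, generate every basis vector $L_{l,m}^{r}$ with $(l,m,r)\neq (0,0,0)$ from that one.

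For Step 1, lift $\bar{x}$ to $x=\sum_{\alpha\in S}c_{\alpha}L_{\alpha}$ and, by subtracting a scalar multiple of $L_{0,0}^{0}$, arrange that $(0,0,0)\notin S$. The identity $[x,L_{\alpha'},L_{\alpha''}]=\sum_{\alpha\in S}c_{\alpha}\,M(\alpha,\alpha',\alpha'')\,L_{\alpha+\alpha'+\alpha''-\nu}$ shifts every support element by the same vector, while the scalar $M(\alpha,\alpha',\alpha'')$ is linear in $\alpha$; as $(\alpha',\alpha'')$ varies, the associated linear functional $\alpha\mapsto M(\alpha,\alpha',\alpha'')$ can be made proportional to any prescribed nonzero linear functional on $\mathbb{R}^{3}$ (via the cross-product interpretation of $3\times 3$ determinants). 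Whenever $S$ contains two elements that are linearly independent as vectors of $\mathbb{R}^{3}$, a single bracket with $M(\cdot,\alpha',\alpha'')$ annihilating one but not the other strictly decreases $|S|$. The collinear configuration, in which all of $S$ consists of scalar multiples of a common vector $v$, is handled by first applying a bracket whose shift $\alpha'+\alpha''-\nu$ is not parallel to $v$ and whose $\alpha'\times\alpha''$ is not orthogonal to $v$; this translates $S$ off the line through the origin, after which any two distinct support vectors become linearly independent. Induction on $|S|$ then terminates with a nonzero scalar multiple of some $L_{\alpha_{0}}$ with $\alpha_{0}\neq (0,0,0)$.

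For Step 2, given a target $(l,m,r)\neq (0,0,0)$, seek $\alpha',\alpha''$ with $\alpha'+\alpha''=(l-l_{0}+1,m-m_{0}+1,r-r_{0})$ and $M(\alpha_{0},\alpha',\alpha'')\neq 0$. A single column operation presents $M$ as a linear function of $\alpha'$ subject to this affine constraint, and it vanishes identically precisely when $(l-l_{0}+1,m-m_{0}+1,r-r_{0})$ is parallel to $(l_{0},m_{0},r_{0})$. Every target outside this one-parameter exceptional line is therefore reached in a single bracket from $L_{\alpha_{0}}$. An exceptional target is reached via a two-step bracket through an intermediate $L_{\alpha^{*}}$ chosen to be non-exceptional relative to $\alpha_{0}$ and such that the target is non-exceptional relative to $\alpha^{*}$; each of these conditions excludes only a one-parameter family of $\alpha^{*}$'s, so a valid choice always exists.

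The principal obstacle is the collinear case in Step 1: any single bracket multiplies the coefficient of each $t_{i}v$ in $x$ by the common scalar $(w\cdot v)t_{i}$ (where $w=\alpha'\times\alpha''$) and so cannot separate two such terms without first breaking the collinearity. The preliminary translation-shift resolves this, after which pairwise linear independence of the support vectors is automatic and the generic separation argument applies. A secondary and minor obstacle, the one-parameter family of exceptional targets in Step 2, is handled by the two-hop trick described above.
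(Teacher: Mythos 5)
Your proposal is correct, and its skeleton coincides with the paper's: induct on the size of the support of a lift of a nonzero element of the ideal until a single basis vector $L_{\alpha_0}$ survives, then generate every $L_\beta$ from $L_{\alpha_0}$, using a two-step detour through an intermediate $L_{\alpha^*}$ for the exceptional targets with $\beta+\nu$ proportional to $\alpha_0$ (your Step 2 is, line for line, the paper's $n=1$ case). Where you genuinely diverge is the reduction step. The paper brackets $u$ with $L_{\alpha_{t+1}}$ and a complementary vector so that the coefficient of the last summand is a determinant with a repeated column, hence zero; this is quick but gives no control over the remaining coefficients $M(\alpha_i,\alpha_{t+1},\beta)$, which can all vanish simultaneously (e.g.\ when every $\alpha_i$ is proportional to $\alpha_{t+1}$), so the resulting element need not be nonzero modulo $C_0$. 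Your version — realize any rational linear functional as $\alpha\mapsto M(\alpha,\alpha',\alpha'')$ via the cross-product interpretation, choose it to kill one support vector while provably preserving another, and pre-translate the support off a line through the origin in the collinear configuration — closes exactly this hole, at the cost of having to verify that primitive integer vectors arise as cross products of lattice vectors. The only loose end in your write-up is that the surviving term could in principle be shifted onto $L_{0,0}^0$ and thus die modulo $C_0$; this is harmless, since replacing $\alpha''$ by $\alpha''+\alpha'$ changes the shift $\alpha'+\alpha''-\nu$ without changing the functional $\tilde\alpha'\times\tilde\alpha''$, so the offending shift is always avoidable. With that one sentence added, your argument is complete and in fact more robust than the printed proof.
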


\begin{proof}
Let $I$ be a nonzero ideal of the quotient algebra $\mathfrak{A}~/C_{0}$, and $ u\in I_{\neq C_0}$. Then we can suppose
$$u=\sum_{i=1}^{n}a_iL_{l_{i},m_{i}}^{r_i}+C_0,~~r_i,l_i,m_i\in \mathbb{Z}+\frac{1}{2}\mathbb{Z},~~ a_i\in \mathbb F_{\neq 0}, 1\leq i\leq n.$$

If $n=1$, then $a_1\neq 0$, and for all $L_{l, m}^r+C_0\neq C_0$ satisfying  $(l+1, m+1, r)\neq k(l_1, m_1, r_1)$ (  $\forall k\in  \mathbb{Z}+\frac{1}{2}\mathbb{Z}$),  there exist
$a,s, t \in \mathbb{Z}+\frac{1}{2}\mathbb{Z}$, such that $ \left|\begin{array}{ccc}
r_1 & a & r \\
l_1 & s & l+1 \\
m_1 &t & m+1 \\
\end{array}
\right|\neq 0.$
Then \begin{equation*}
\begin{split}
  &[u,L_{l_1+s,m_1+t}^{r_1+a}+C_0, L_{l-2l_1-s+1,m-2m_1-t+1}^{r-2r_1-a}+C_0]\\
  =&a_1[L_{l_1,m_1}^{r_1}, L_{l_1+s,m_1+t}^{r_1+a}, L_{l-2l_1-s+1,m-2m_1-t+1}^{r-2r_1-a}]+C_0\\
  =&a_1\left|\begin{array}{ccc}
  r_1 & r_1+a & r-2r_1-a \\
  l_1 & l_1+s & l-2l_1-s+1 \\
  m_1&m_1+t & m-2m_1-t+1 \\
  \end{array}
  \right|L_{l,m}^{r}+C_0\\
   =&a_1 \left|\begin{array}{ccc}
    r_1 & a & r \\
    l_1 & s & l+1 \\
     m_1 &t & m+1 \\
     \end{array}
     \right|L_{l,m}^{r}+C_0\in I_{\neq 0}, \\
\end{split}
\end{equation*}
therefore,   for all $L_{l, m}^r+C_0\neq C_0$ satisfying  $(l+1, m+1, r)\neq k(l_1, m_1, r_1)$, $k\in  \mathbb{Z}+\frac{1}{2}\mathbb{Z}$, we have $L_{l,m}^r+C_0\in I$.

For $L_{l, m}^r+C_0\neq C_0$ satisfying  $(l+1, m+1, r)=k(l_1, m_1, r_1)$, $k\in  \mathbb{Z}+\frac{1}{2}\mathbb{Z}$, there are $l', m', r'\in  \mathbb{Z}+\frac{1}{2}\mathbb{Z}$, such that
  $(l+1, m+1, r)\neq k(l', m', r')$ for all $k\in  \mathbb{Z}+\frac{1}{2}\mathbb{Z}$. By the above discussion, we have $L_{l', m'}^{r'}+C_0\in I_{\neq C_0}$, therefore, $L_{l,m}^{r}+C_0\in I.$ It follows $I=$$\mathfrak{A}~/C_0$.

Now assume $n=t\in \mathbb Z^+$ is true. We will discuss the case $n=t+1$. Thanks to  \eqref {eq:3-Jacobi bracket}, there are $ r_0\in \mathbb{Z}+\frac{1}{2}\mathbb{Z}$ such that
$r_0\neq 0$ and $r_i+r_{t+1}+r_0\neq 0$ for all $1\leq i\leq t$. Then
\begin{equation*}
  \begin{split}
  &[u, L_{l_{t+1},m_{t+1}}^{r_{t+1}}+C_0, L_{-l_{t+1}+1,-m_{t+1}+1}^{r_0}+C_0]\\
  =& [\sum_{i=1}^{t+1}a_iL_{l_i,m_i}^{r_i}, L_{l_{t+1},m_{t+1}}^{r_{t+1}}, L_{-l_{t+1}+1,-m_{t+1}+1}^{r_0}]+C_0\\
  =&\sum_{i=1}^{t}a_iL_{l_i,m_i}^{r_i+r_0+r_{t+1}}+C_0\in I_{\neq C_0}.
  \end{split}
\end{equation*}
Therefore, we have $I=$$\mathfrak{A}~/C_0$. The proof is complete.
\end{proof}

\begin{theorem} 1) The  subspace  $H$ spanned by
\begin{equation}\label{eq:cardon}
  \{L_{i,i}^{t} \mid ~~ t,i\in \mathbb{Z}+\frac{1}{2}\mathbb{Z}\}
\end{equation}  is a subalgebra of $\mathfrak{L}$ and satisfies  $[H, H, H]=0$. Furthermore,
for any $L^r_{l, m}\in \mathfrak{L}$, $[L^r_{l, m}, H, H]\subseteq H$ if and only if $L^r_{l, m}\in H$, and $L_{l,m}^r\cdot L_{i,i}^t\in H$  if and only if $L^r_{l, m}\in H$, that is, $l=m$.

2) For any $i, t\in \mathbb{Z}+\frac{1}{2}\mathbb{Z}$, the left multiplication
$\mbox{ad}(L_{i,i}^{t}, L_{-i+1,-i+1}^{-t}): \mathfrak{L}\rightarrow  \mathfrak{L} $ is semi-simple, that is,
$$ \mathfrak{L}=\sum\limits_{k\in \mathbb{Z}+\frac{1}{2}\mathbb{Z}}\mathfrak{L}(k, t),$$
where $\mathfrak{L}(k, t)$ is the subspace of $\mathfrak{L}$ spanned by
\begin{equation*}
\begin{split}
&\big\{ L^r_{l,m} |~~ \mbox{ad}(L_{i,i}^{t}, L_{-i+1,-i+1}^{-t})L_{l,m}^r=k L_{l, m}^r, ~~l, m, r\in \mathbb{Z}+\frac{1}{2}\mathbb{Z} \big\}\\
=&\{ L_{l, m}^r |~~ l, m, r\in  \mathbb{Z}+\frac{1}{2}\mathbb{Z}, \mbox{and}~~ t(m-l)=k\}.
\end{split}
\end{equation*}

Furthermore, if $t\neq 0$, then $H=\mathfrak{L}(0, t)$. If $t=0$, then $\mathfrak{L}(0, 0)=\mathfrak{L}$.
\end{theorem}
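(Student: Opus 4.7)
The plan is to reduce every claim to manipulations of the determinant $M(\alpha_1,\alpha_2,\alpha_3)$ appearing in the defining formula \eqref{eq:3-Jacobi bracket}, exploiting that two equal rows force it to vanish.

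For part~(1), closure of $H$ under the associative product is immediate from \eqref{eq:muit}: $L_{i_1,i_1}^{t_1}\cdot L_{i_2,i_2}^{t_2}=L_{i_1+i_2,i_1+i_2}^{t_1+t_2}\in H$. For the $3$-bracket, setting $\alpha_j=(i_j,i_j,t_j)$ the determinant $M(\alpha_1,\alpha_2,\alpha_3)$ has its second and third rows identical, hence vanishes, and $[H,H,H]=0$. To characterize when $[L_{l,m}^r,H,H]\subseteq H$, I compute using \eqref{eq:3-Jacobi bracket} and simplify the resulting $3\times 3$ determinant by subtracting the second row from the third:
\begin{equation*}
[L_{l,m}^r,L_{i_1,i_1}^{t_1},L_{i_2,i_2}^{t_2}]=(m-l)(t_1 i_2-t_2 i_1)\,L_{l+i_1+i_2-1,\,m+i_1+i_2-1}^{r+t_1+t_2}.
\end{equation*}
The image is in $H$ iff either the scalar vanishes or $l=m$. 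If $l\neq m$, the choice $i_1=0,t_1=1,i_2=1,t_2=0$ gives $t_1i_2-t_2i_1=1\neq 0$, so the bracket lies outside $H$. Therefore $[L_{l,m}^r,H,H]\subseteq H$ forces $l=m$, while the converse is trivial. The product statement is a one-line check: $L_{l,m}^r\cdot L_{i,i}^t=L_{l+i,m+i}^{r+t}\in H$ iff $l=m$.

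For part~(2), the key computation is
\begin{equation*}
\mathrm{ad}(L_{i,i}^{t},L_{-i+1,-i+1}^{-t})(L_{l,m}^r)=M\cdot L_{l,m}^r,
\end{equation*}
where $M$ is the determinant with rows $(t,-t,r),(i,-i+1,l),(i,-i+1,m)$. Subtracting the second row from the third yields a bottom row $(0,0,m-l)$, and expansion along it gives
\begin{equation*}
M=(m-l)\left|\begin{array}{cc} t & -t \\ i & -i+1 \end{array}\right|=(m-l)\bigl(t(-i+1)+ti\bigr)=t(m-l).
\end{equation*}
Thus every basis vector $L_{l,m}^r$ is an eigenvector with eigenvalue $t(m-l)$, so $\mathrm{ad}(L_{i,i}^{t},L_{-i+1,-i+1}^{-t})$ is simultaneously diagonalized by the basis and $\mathfrak{L}=\sum_{k\in\mathbb{Z}+\frac{1}{2}\mathbb{Z}}\mathfrak{L}(k,t)$ with eigenspaces exactly as stated. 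If $t\neq 0$, then $t(m-l)=0\Leftrightarrow l=m$, so $\mathfrak{L}(0,t)=H$; if $t=0$, every basis vector is annihilated, so $\mathfrak{L}(0,0)=\mathfrak{L}$.

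The argument is almost entirely determinant arithmetic; the only real subtlety is the ``only if'' half of the first characterization in~(1), where one must exhibit explicit indices making $t_1i_2-t_2i_1$ nonzero, but this is immediate for the index set $\mathbb{Z}+\frac{1}{2}\mathbb{Z}$.
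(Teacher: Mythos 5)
Your proposal is correct and follows essentially the same route as the paper: closure of $H$ and $[H,H,H]=0$ from a repeated row in the determinant $M$, and the explicit eigenvalue computation $\mathrm{ad}(L_{i,i}^{t},L_{-i+1,-i+1}^{-t})L_{l,m}^{r}=t(m-l)L_{l,m}^{r}$ for part (2). You are in fact slightly more careful than the paper on the ``only if'' direction of $[L_{l,m}^r,H,H]\subseteq H\Rightarrow l=m$: since the bracket equals $(m-l)(t_1i_2-t_2i_1)L^{r+t_1+t_2}_{l+i_1+i_2-1,\,m+i_1+i_2-1}$, it lands in $H$ trivially whenever the scalar factor vanishes, so one does need, as you do, to exhibit indices with $t_1i_2-t_2i_1\neq 0$.
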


\begin{proof} Thanks to \eqref{eq:3-Jacobi bracket}, for all $L_{i,i}^{s}, L_{j,j}^{t},  L_{h,h}^{r} \in H$
$$L_{i,i}^{s}\cdot L_{j,j}^{t}=L_{i+j,i+j}^{s+t}\in H, ~~~~ [L_{i,i}^{s}, L_{j,j}^{t},  L_{h,h}^{r}]=0,$$
therefore, $H$ is a subalgebra of $\mathfrak{L}$ and $[H, H, H]=0.$

For any $L_{l,m}^{r}\in \mathfrak{L}$,  and $\forall i, j, s, t\in \mathbb{Z}+\frac{1}{2}\mathbb{Z}  $, by \eqref{eq:3-Jacobi bracket},

$[L_{l,m}^{r}, L_{i,i}^{s}, L_{j,j}^{t}]=\left| \begin{array}{ccc}
                                                                            r & s & t \\
                                                                            l & i & j \\
                                                                            m & i & j \\
                                                                          \end{array}
                                                                        \right|L_{i+j+l-1,i+j+m-1}^{r+s+t} \in H$
if and only if $i+j+l-1=i+j+m-1$, that is, $L^r_{l, m}\in H$. We get 1).

Apply \eqref{eq:3-Jacobi bracket} and a direct computation, for all $L_{l,m}^{r}\in \mathfrak{L}, $
\begin{equation}\label{eq:iit}\mbox{ad}(L_{i,i}^{t}, L_{-i+1,-i+1}^{-t})L_{l,m}^{r}=\left| \begin{array}{ccc}
                                                                             t & -t &r\\
                                                                             i & -i+1&l \\
                                                                             i & -i+1&m \\
                                                                          \end{array}
                                                                        \right|L_{l,m}^{r}=t(m-l) L_{l,m}^{r}.
\end{equation}
Therefore,  $L_{l,m}^{r} \in \mathfrak{L}(t(m-l), t)$, and
 $ \mathfrak{L}=\sum\limits_{k\in \mathbb{Z}+\frac{1}{2}\mathbb{Z}}\mathfrak{L}(k, t).$

Thanks to \eqref{eq:iit}, if $t\neq 0$, then $H=\mathfrak{L}(0, t)$, and if $t=0$, then $\mathfrak{L}(0, 0)=\mathfrak{L}.$
 The result 2) follows.
\end{proof}

\begin{defn}
Let $\delta$ be a linear mapping of $\mathfrak{L}$. If $\delta$ satisfies that $\delta\in Der(A,\cdot)\cap Der(A,[,,])$,
that is, for all $l_i,  m_i, r_i\in \mathbb{Z}+\frac{1}{2}\mathbb{Z},  1\leq i\leq 3,$
\begin{eqnarray}
\label{eq:multD}  &&\delta(L_{l_1,m_1}^{r_1}\cdot L_{l_2,m_2}^{r_2})=\delta(L_{l_1,m_1}^{r_1})\cdot L_{l_2,m_2}^{r_2}+L_{l_1,m_1}^{r_1}\cdot \delta(L_{l_2,m_2}^{r_2}),\\
&&\delta([L_{l_1,m_1}^{r_1},L_{l_2,m_2}^{r_2},L_{l_3,m_3}^{r_3}])=[\delta(L_{l_1,m_1}^{r_1}),L_{l_2,m_2}^{r_2}, L_{l_3,m_3}^{r_3}]\nonumber\\
&&+[L_{l_1,m_1}^{r_1},\delta(L_{l_2,m_2}^{r_2}), L_{l_3,m_3}^{r_3}]+[L_{l_1,m_1}^{r_1},L_{l_2,m_2}^{r_2}, \delta(L_{l_3,m_3}^{r_3})],\label{eq:3_LieD}
\end{eqnarray}
  then $\delta$ is called {\bf a derivation of $\mathfrak{L}$}.
\end{defn}

\begin{theorem} The left multiplications $ad(L_{l_1,m_1}^{r_1},L_{l_2,m_2}^{r_2}):\mathfrak{L}\rightarrow \mathfrak{L}$, for all~$\alpha_i=(l_i, m_i, r_i)\in (\mathbb{Z}+\frac{1}{2}\mathbb{Z})^3, 1\leq i\leq 3$ are  derivations of $\mathfrak{L}$.
\end{theorem}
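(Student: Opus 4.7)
The plan is to verify the two defining conditions \eqref{eq:multD} and \eqref{eq:3_LieD} of a derivation separately, and to observe that each is essentially a direct restatement of a structural axiom of the 3-Lie Poisson algebra $\mathfrak{L}$ already established in Theorem~\ref{thm:3-Jacobi}. Fix $\alpha_1=(l_1,m_1,r_1)$ and $\alpha_2=(l_2,m_2,r_2)$ in $(\mathbb{Z}+\tfrac12\mathbb{Z})^3$, and set $D:=\mathrm{ad}(L_{l_1,m_1}^{r_1},L_{l_2,m_2}^{r_2})$, so that $D(x)=[L_{l_1,m_1}^{r_1},L_{l_2,m_2}^{r_2},x]$ for all $x\in\mathfrak{L}$.

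First I would check the associative Leibniz property \eqref{eq:multD}. By the skew-symmetry of $[\,,\,,\,]$, one has $D(x)=[x,L_{l_1,m_1}^{r_1},L_{l_2,m_2}^{r_2}]$ up to sign, so applying the 3-Lie Poisson Leibniz rule \eqref{eq:Lei1} (with $y_2=L_{l_1,m_1}^{r_1}$ and $y_3=L_{l_2,m_2}^{r_2}$) to $x=L_{l_3,m_3}^{r_3}\cdot L_{l_4,m_4}^{r_4}$ gives exactly
\[
D\bigl(L_{l_3,m_3}^{r_3}\cdot L_{l_4,m_4}^{r_4}\bigr)=D\bigl(L_{l_3,m_3}^{r_3}\bigr)\cdot L_{l_4,m_4}^{r_4}+L_{l_3,m_3}^{r_3}\cdot D\bigl(L_{l_4,m_4}^{r_4}\bigr),
\]
which is \eqref{eq:multD} on basis elements and hence, by $\mathbb{F}$-linearity, on all of $\mathfrak{L}$.

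Next I would check the 3-bracket Leibniz property \eqref{eq:3_LieD}. This is a direct consequence of the Fundamental Identity \eqref{eq:Jaco}: taking $n=3$, $y_2=L_{l_1,m_1}^{r_1}$, $y_3=L_{l_2,m_2}^{r_2}$, and $x_i=L_{l_{i+2},m_{i+2}}^{r_{i+2}}$ for $i=1,2,3$, \eqref{eq:Jaco} reads
\[
\bigl[[x_1,x_2,x_3],y_2,y_3\bigr]=\bigl[[x_1,y_2,y_3],x_2,x_3\bigr]+\bigl[x_1,[x_2,y_2,y_3],x_3\bigr]+\bigl[x_1,x_2,[x_3,y_2,y_3]\bigr],
\]
which, after rewriting the outer brackets with $y_2,y_3$ as $D$ via skew-symmetry, is precisely \eqref{eq:3_LieD} on basis elements; linearity extends it to $\mathfrak{L}$.

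Since both Leibniz-type identities that $D$ must satisfy are already packaged into Theorem~\ref{thm:3-Jacobi}, there is no genuine obstacle: the only care needed is bookkeeping the sign/position of the arguments $L_{l_1,m_1}^{r_1},L_{l_2,m_2}^{r_2}$ when passing from \eqref{eq:Lei1} and \eqref{eq:Jaco} (where they appear in the last two slots) to the form \eqref{eq:multD}, \eqref{eq:3_LieD} (where $D$ acts on the first slot), which is handled by the skew-symmetry of the 3-bracket.
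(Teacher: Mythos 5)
Your proposal is correct and follows essentially the same route as the paper, whose proof is just the one-line instruction to apply the Fundamental Identity, the definition of left multiplication, and the multiplication rule; you have simply spelled out that the FI yields the bracket-derivation property \eqref{eq:3_LieD} and the Poisson--Leibniz rule \eqref{eq:Lei1} yields the product-derivation property \eqref{eq:multD}, with cyclic (hence even, sign-free) permutations handling the slot positions.
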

\begin{proof}  Apply Eqs \eqref{eq:Jaco}, \eqref{eq:left} and \eqref{eq:muit}.
\end{proof}

\begin{lemma}\label{lem:basis}  $\mathfrak{L}$ has the minimal set of generators
$$S=\{L_{0,0}^{-1},L_{-1,0}^{0},L_{0,-1}^{0},L_{0,0}^{\frac{1}{2}},L_{\frac{1}{2},0}^{0},L_{0,\frac{1}{2}}^{0}\}.$$
\end{lemma}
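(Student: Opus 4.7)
The proof has two halves: first, showing $\langle S\rangle=\mathfrak L$, and second, showing that for each $s\in S$ the subalgebra $\langle S\setminus\{s\}\rangle$ is proper. For generation I would exploit the fact that the associative part of $\mathfrak L$ is nothing but the group algebra of $((\tfrac12\mathbb Z)^3,+)$, since by \eqref{eq:muit} the product of basis elements simply adds indices. The six lattice vectors attached to $S$ are
\[
(0,0,-1),\ (-1,0,0),\ (0,-1,0),\ (0,0,\tfrac12),\ (\tfrac12,0,0),\ (0,\tfrac12,0),
\]
i.e.\ positive ($+\tfrac12$) and negative ($-1$) steps in each of the three coordinate directions. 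A direct monoid check shows every $(l,m,r)\in(\tfrac12\mathbb Z)^3$ is a non-negative integer combination of these six vectors, so every basis element $L_{l,m}^r$ is already an associative product of elements of $S$, giving $\langle S\rangle=\mathfrak L$.

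For minimality, the plan is, for each $s\in S$, to exhibit a grading or parity on $\mathfrak L$ compatible with both $\cdot$ and $[,,]$ under which $S\setminus\{s\}$ sits in a proper homogeneous part not containing $s$. Because \eqref{eq:3-Jacobi bracket} shifts indices by $-\nu=(-1,-1,0)$, the $r$-coordinate is still additive and provides a genuine $\tfrac12\mathbb Z$-grading $\mathfrak L=\bigoplus_k\mathfrak L_k$; removing $L_{0,0}^{-1}$ (resp.\ $L_{0,0}^{1/2}$) leaves every remaining generator with $r\ge 0$ (resp.\ $r\in\mathbb Z$), confining the generated subalgebra to $\bigoplus_{k\ge 0}\mathfrak L_k$ (resp.\ $\bigoplus_{k\in\mathbb Z}\mathfrak L_k$) and excluding the removed element. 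The $l$- and $m$-coordinates are each shifted by $-1$ under brackets, but after doubling this shift becomes $-2\equiv 0\pmod 2$, so the parities of $2l$ and $2m$ each define a $\mathbb Z/2$-grading preserved by both operations. Every generator in $S\setminus\{L_{1/2,0}^0\}$ has $2l$ even, and similarly for $L_{0,1/2}^0$ with $2m$, so in each case the subalgebra generated lies in the even part and cannot recover the removed generator.

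The main obstacle will be the removals of $L_{-1,0}^0$ and $L_{0,-1}^0$: no single linear grading on $l$ (or $m$) alone isolates them, since chained brackets and products threaten to propagate to arbitrary half-integer values of $l$. To handle these I would combine the $r$- and $(2l,2m)$-parity gradings with the key vanishing identity that $M(\alpha_1,\alpha_2,\alpha_3)=0$ whenever one row of its defining matrix vanishes or two of its rows coincide — in particular any triple bracket with all $l_i=0$ is zero — and then argue by induction on the depth of iterated brackets that no basis element $L_{-1,0}^0$ can actually be reached from $S\setminus\{L_{-1,0}^0\}$. The $L_{0,-1}^0$ case would then follow by the evident $l\leftrightarrow m$ symmetry of the whole construction.
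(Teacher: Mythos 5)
Your generation argument is exactly the paper's: the paper writes $L_{l,m}^r=L_{l,0}^0\cdot L_{0,m}^0\cdot L_{0,0}^r$ and expresses each factor coordinate-wise as a product of powers of the relevant pair of generators, which is precisely your monoid observation. Your four grading arguments (the $\tfrac12\mathbb Z$-grading by $r$ and the two $\mathbb Z/2$-parities of $2l$ and $2m$) are correct and compatible with both operations, and they rigorously dispose of the removals of $L_{0,0}^{-1}$, $L_{0,0}^{1/2}$, $L_{\frac12,0}^{0}$, $L_{0,\frac12}^{0}$ --- which is more than the paper does, since the paper's proof only establishes generation and never actually verifies minimality.

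However, your plan for the two remaining cases cannot be completed, because the statement you are trying to prove there is false if $\langle\,\cdot\,\rangle$ means closure under both the associative product and the $3$-bracket. Set $T=S\setminus\{L_{-1,0}^{0}\}$. Then $L_{\frac12,0}^{-1}=L_{\frac12,0}^{0}\cdot L_{0,0}^{-1}$, $L_{0,0}^{1}=(L_{0,0}^{\frac12})^{2}$ and $L_{0,1}^{0}=(L_{0,\frac12}^{0})^{2}$ all lie in $\langle T\rangle$, and by \eqref{eq:3-Jacobi bracket}
\begin{equation*}
[L_{\frac12,0}^{-1},\,L_{0,0}^{1},\,L_{0,1}^{0}]
=\left|\begin{array}{ccc} -1 & 1 & 0\\ \tfrac12 & 0 & 0\\ 0 & 0 & 1\end{array}\right|
L_{-\frac12,\,0}^{0}
=-\tfrac12\,L_{-\frac12,0}^{0},
\end{equation*}
so $L_{-\frac12,0}^{0}\in\langle T\rangle$ and hence $L_{-1,0}^{0}=(L_{-\frac12,0}^{0})^{2}\in\langle T\rangle$. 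The same happens for $L_{0,-1}^{0}$ by the $l\leftrightarrow m$ symmetry. Your proposed induction on bracket depth is therefore doomed: the obstruction you hope to find does not exist, precisely because a single nonzero bracket can lower $l$ to $-\tfrac12$ and squaring then reaches $-1$. The only way to salvage minimality for these two generators is to interpret the lemma as a statement about the associative algebra $(A,\cdot)$ alone (which is all the paper's proof uses); in that reading the argument is trivial --- products of $T$ keep $l\ge 0$ --- and your worry about brackets is moot. You should either adopt that reading explicitly or record that $S$ fails to be minimal as a generating set of the $3$-Lie Poisson algebra.
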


\begin{proof}  By \eqref{eq:muit},  $(L_{l,m}^{r})^j=\underbrace {L_{l,m}^{r}\cdot L_{l,m}^{r} \cdot...\cdot L_{l,m}^{r}}_j =L_{jl,jm}^{jr}$, $\forall r,l,m\in \mathbb{Z}+\frac{1}{2}\mathbb{Z},~j\in \mathbb{Z}_{\geq0}$,~then
$$~~L_{l,0}^{0}=\left\{
     \begin{array}{ll}
       (L_{-1,0}^{0})^{-l}, & ~~~\hbox{$~l\in -\mathbb{Z}^+;$} \\
       (L_{-1,0}^{0}\cdot L_{\frac{1}{2},0}^{0})^{-2l}, & ~~\hbox{$~l\in -\mathbb{Z}^++\frac{1}{2};$} \\
       (L_{-1,0}^{0})\cdot (L_{\frac{1}{2},0}^{0})^{2}, & ~~\hbox{$~l=0;$} \\
       (L_{\frac{1}{2},0}^{0})^{2l}, & ~~\hbox{$~l\in \mathbb{Z}^+;$} \\
       (L_{\frac{1}{2},0}^{0})^{2l}, & ~~~\hbox{$~l\in \mathbb{Z}^+-\frac{1}{2}.$}
     \end{array}
   \right.
$$
$$~~~~~~~L_{0,m}^{0}=\left\{
     \begin{array}{ll}
       (L_{0,-1}^{0})^{-m}, & \hbox{$m\in -\mathbb{Z}^+;$} \\
       (L_{0,-1}^{0}\cdot L_{0,\frac{1}{2}}^{0})^{-2m}, & \hbox{$m\in -\mathbb{Z}^++\frac{1}{2};$} \\
       (L_{0,-1}^{0})\cdot (L_{0,\frac{1}{2}}^{0})^{2}, & \hbox{$m=0;$} \\
       (L_{0,\frac{1}{2}}^{0})^{2m}, & \hbox{$m\in \mathbb{Z}^+;$} \\
       (L_{0,\frac{1}{2}}^{0})^{2m}, & \hbox{$m\in \mathbb{Z}^+-\frac{1}{2}$.}
     \end{array}
   \right.
$$
$$~~~L_{0,0}^{r}=\left\{
     \begin{array}{ll}
       (L_{0,0}^{-1})^{-r}, & \hspace{2mm}~~~~\hbox{$r\in -\mathbb{Z}^+;$} \\
       (L_{0,0}^{-1}\cdot L_{0,0}^{\frac{1}{2}})^{-2r}, & \hbox{$~~~~r\in -\mathbb{Z}^++\frac{1}{2};$}~~~\\
       (L_{0,0}^{-1})\cdot (L_{0,0}^{\frac{1}{2}})^{2}, & ~~~~\hbox{$r=0;$} \\
       (L_{0,0}^{\frac{1}{2}})^{2r}, & ~~~~\hbox{$r\in \mathbb{Z}^+;$} \\
       (L_{0,0}^{\frac{1}{2}})^{2r}, & ~~~~\hbox{$r\in \mathbb{Z}^+-\frac{1}{2}$.}
     \end{array}
   \right.~~
$$
Thanks to  $L_{l,m}^{r}=L_{l,0}^{0}\cdot L_{0,m}^{0}\cdot L_{0,0}^{r}$, we get  $S=\{L_{0,0}^{-1},L_{-1,0}^{0},L_{0,-1}^{0},L_{0,0}^{\frac{1}{2}},L_{\frac{1}{2},0}^{0},L_{0,\frac{1}{2}}^{0}\}$ is a minimal set of generators of $\mathfrak{L}$.
\end{proof}

\begin{theorem}\label{thm:Dermulti}
Derivation algebra~$Der(A,\cdot)$~of commutative associative algebra$(A,\cdot)$ is spanned by
 \begin{equation}\label{eq:DerA_cdot}
   \{\delta\mid\delta\in End(A),\delta(L_{l,m}^{r})=2rL_{l,m}^{r-\frac{1}{2}}\delta (L_{0,0}^{\frac{1}{2}})+2lL_{l-\frac{1}{2},m}^{r}\delta (L_{\frac{1}{2},0}^{0})+2mL_{l,m-\frac{1}{2}}^{r}\delta (L_{0,\frac{1}{2}}^{0})\}.
 \end{equation}
\end{theorem}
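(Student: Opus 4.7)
The plan is to show that every derivation $\delta$ of the commutative associative algebra $(A,\cdot)$ is determined by its values on the three ``square-root'' generators $L_{0,0}^{1/2}$, $L_{1/2,0}^{0}$, $L_{0,1/2}^{0}$, and that these values can be prescribed freely. The guiding idea is that, by Lemma \ref{lem:basis} and its proof, every basis element $L_{l,m}^{r}$ factors as $L_{l,m}^{r}=L_{l,0}^{0}\cdot L_{0,m}^{0}\cdot L_{0,0}^{r}$ and each of the three one-parameter factors is, up to inversion, a power of one of the three chosen generators. Since $L_{0,0}^{0}$ is the unit and every basis vector is invertible, $\delta(L_{0,0}^{0})=0$ automatically and $\delta$ on inverses is forced by Leibniz.

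First I would establish the three one-variable identities
\begin{align*}
\delta(L_{0,0}^{r}) &= 2r\,L_{0,0}^{r-1/2}\,\delta(L_{0,0}^{1/2}),\\
\delta(L_{l,0}^{0}) &= 2l\,L_{l-1/2,0}^{0}\,\delta(L_{1/2,0}^{0}),\\
\delta(L_{0,m}^{0}) &= 2m\,L_{0,m-1/2}^{0}\,\delta(L_{0,1/2}^{0}),
\end{align*}
for all $r,l,m\in\mathbb{Z}+\tfrac12\mathbb{Z}$. For $r\in\mathbb{Z}^{+}$ or $r\in\mathbb{Z}^{+}-\tfrac12$, this follows by iterating Leibniz on $L_{0,0}^{r}=(L_{0,0}^{1/2})^{2r}$. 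For negative $r$, I would use the relation $L_{0,0}^{r}\cdot L_{0,0}^{-r}=L_{0,0}^{0}$, apply Leibniz, and solve, obtaining the same formula with $2r<0$. The case $r=0$ is immediate from $\delta(1)=0$. The analogous arguments for $l$ and $m$ are identical.

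Next I would combine these using the factorization above and the Leibniz rule for a triple product:
$$\delta(L_{l,m}^{r})=\delta(L_{l,0}^{0})\cdot L_{0,m}^{0}\cdot L_{0,0}^{r}+L_{l,0}^{0}\cdot \delta(L_{0,m}^{0})\cdot L_{0,0}^{r}+L_{l,0}^{0}\cdot L_{0,m}^{0}\cdot \delta(L_{0,0}^{r}).$$
Substituting the three one-variable identities and collapsing products via \eqref{eq:muit}, namely $L_{a,b}^{c}\cdot L_{a',b'}^{c'}=L_{a+a',b+b'}^{c+c'}$, yields exactly \eqref{eq:DerA_cdot}. This shows inclusion of $\mathrm{Der}(A,\cdot)$ in the displayed span.

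For the converse, I would fix arbitrary elements $u_{1},u_{2},u_{3}\in A$, define $\delta$ by \eqref{eq:DerA_cdot} with $\delta(L_{0,0}^{1/2})=u_{1}$, $\delta(L_{1/2,0}^{0})=u_{2}$, $\delta(L_{0,1/2}^{0})=u_{3}$, and verify Leibniz on basis pairs: expanding $\delta(L_{l_{1},m_{1}}^{r_{1}}\cdot L_{l_{2},m_{2}}^{r_{2}})=\delta(L_{l_{1}+l_{2},m_{1}+m_{2}}^{r_{1}+r_{2}})$ via the formula produces coefficients $2(r_{1}+r_{2})$, $2(l_{1}+l_{2})$, $2(m_{1}+m_{2})$, which split linearly into the required two-term sum $\delta(L_{l_{1},m_{1}}^{r_{1}})\cdot L_{l_{2},m_{2}}^{r_{2}}+L_{l_{1},m_{1}}^{r_{1}}\cdot\delta(L_{l_{2},m_{2}}^{r_{2}})$ after applying \eqref{eq:muit} to the ambient products. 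The main obstacle is not any single step but the bookkeeping across the five sub-cases of Lemma \ref{lem:basis} (positive integer, positive half-integer, zero, negative integer, negative half-integer) when proving the one-variable identities for $\delta(L_{0,0}^{r})$ and its analogues; once those identities are in hand uniformly in $r,l,m\in\mathbb{Z}+\tfrac12\mathbb{Z}$, the rest is formal.
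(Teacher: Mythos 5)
Your proposal is correct and follows essentially the same route as the paper: derive $\delta(L_{0,0}^{r})=2rL_{0,0}^{r-\frac{1}{2}}\delta(L_{0,0}^{\frac{1}{2}})$ and its two analogues from the Leibniz rule applied to powers and inverses of the generators of Lemma~\ref{lem:basis}, then combine them through the factorization $L_{l,m}^{r}=L_{l,0}^{0}\cdot L_{0,m}^{0}\cdot L_{0,0}^{r}$. The only difference is that you also verify the converse inclusion (that any endomorphism defined by \eqref{eq:DerA_cdot} from arbitrary values on the three generators is indeed a derivation), which the paper leaves implicit; that check is correct and is in fact needed for the stated equality of the two spaces.
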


\begin{proof}
By \eqref{eq:multD},
\begin{equation*}
  \delta (L_{0,0}^{a})=\left\{
  \begin{array}{ll}
    -aL_{0,0}^{a+1}\delta (L_{0,0}^{-1}), & \hbox{$a\in-\mathbb{Z}^+$;} \\\\
    (\frac{1}{2}-a)L_{0,0}^{a+1}\delta (L_{0,0}^{-1})+L_{0,0}^{a-\frac{1}{2}}\delta (L_{0,0}^{\frac{1}{2}}), & \hbox{$a\in-\mathbb{Z}^++\frac{1}{2}$;} \\\\
    2aL_{0,0}^{a-\frac{1}{2}}\delta (L_{0,0}^{\frac{1}{2}}), & \hbox{$a\in \mathbb{Z}^+$;} \\\\
    (2a-1)L_{0,0}^{a-\frac{1}{2}}\delta
    (L _{0,0}^{\frac{1}{2}})+L_{0,0}^{a-\frac{1}{2}}\delta (L_{0,0}^{\frac{1}{2}}), & \hbox{$a\in \mathbb{Z}^++\frac{1}{2}$;} \\\\
    0, & \hbox{$a=0$,}
  \end{array}
\right.
\end{equation*}
and
\begin{equation*}
   \begin{split}
    0=\delta(L_{0,0}^{-a}\cdot L_{0,0}^{a})&=L_{0,0}^{a}\delta (L_{0,0}^{-a})+L_{0,0}^{-a}\delta (L_{0,0}^{a}) \\
       &=L_{0,0}^{a}aL_{0,0}^{-a+1}\delta (L_{0,0}^{-1})+2aL_{0,0}^{-a}L_{0,0}^{a-\frac{1}{2}}\delta (L_{0,0}^{\frac{1}{2}})\\
       &=aL_{0,0}^{1}\delta (L_{0,0}^{-1})+2aL_{0,0}^{-\frac{1}{2}}\delta (L_{0,0}^{\frac{1}{2}}).
   \end{split}
\end{equation*}
Then we have
\begin{equation*}
  \delta (L_{0,0}^{-1})=-2L_{0,0}^{-\frac{3}{2}}\delta (L_{0,0}^{\frac{1}{2}}), ~~ \mbox{ and} ~~
   \delta (L_{0,0}^{a})=2aL_{0,0}^{a-\frac{1}{2}}\delta (L_{0,0}^{\frac{1}{2}}), \forall~ a\in\mathbb{Z}+\frac{1}{2}\mathbb{Z}.
\end{equation*}
Similarly, we have
\begin{equation*}
  \delta (L_{a,0}^{0})=2aL_{a-\frac{1}{2},0}^{0}\delta (L_{\frac{1}{2},0}^{0}), \quad
  \delta (L_{0,a}^{0})=2aL_{0,a-\frac{1}{2}}^{0}\delta (L_{0,\frac{1}{2}}^{0}), ~~\forall~ a\in\mathbb{Z}+\frac{1}{2}\mathbb{Z}.
\end{equation*}
Thanks to ~$\delta\in Der(A,\cdot)$,  we have
\begin{equation*}
\begin{split}
  \delta(L_{l,m}^{r})&=\delta(L_{0,0}^{r}\cdot L_{l,0}^{0}\cdot L_{0,m}^{0})\\
&=\delta (L_{0,0}^{r})\cdot L_{l,0}^{0}\cdot L_{0,m}^{0}+L_{0,0}^{r}\cdot \delta (L_{l,0}^{0})\cdot L_{0,m}^{0}+L_{0,0}^{r}\cdot L_{l,0}^{0}\cdot \delta (L_{0,m}^{0})\\
&=2rL_{l,m}^{r-\frac{1}{2}}\delta (L_{0,0}^{\frac{1}{2}})+2lL_{l-\frac{1}{2},m}^{r}\delta (L_{\frac{1}{2},0}^{0})+2mL_{l,m-\frac{1}{2}}^{r}\delta (L_{0,\frac{1}{2}}^{0}).
\end{split}
\end{equation*}
It follows the result.\end{proof}

By Lemma \ref{lem:basis} and Theorem \ref{thm:Dermulti}, for discussing derivations of $\mathfrak{L}$, we need ~to study the properties of $\delta (L_{0,0}^{\frac{1}{2}})$~,~$\delta (L_{\frac{1}{2},0}^{0})$~and~$\delta (L_{0,\frac{1}{2}}^{0})$, where $\delta\in Der(\mathfrak{L})$. So, for any $\delta\in Der(\mathfrak{L})$,~suppose
\begin{equation}\label{eq:setD1}
  \delta (L_{0,0}^{\frac{1}{2}})=\sum_{r,l,m\in\mathbb{Z}+\frac{1}{2}\mathbb{Z}} \lambda(\frac{1}{2},0,0;r,l,m)L_{l,m}^{r},\quad \lambda(\frac{1}{2},0,0;r,l,m)\in \mathbb{F},
\end{equation}
\begin{equation}\label{eq:setD2}
  \delta (L_{\frac{1}{2},0}^{0})=\sum_{r,l,m\in\mathbb{Z}+\frac{1}{2}\mathbb{Z}} \lambda(0,\frac{1}{2},0;r,l,m)L_{l,m}^{r}, \quad \lambda(0,\frac{1}{2},0;r,l,m)\in \mathbb{F},
\end{equation}
\begin{equation}\label{eq:setD3}
  \delta (L_{0,\frac{1}{2}}^{0})=\sum_{r,l,m\in\mathbb{Z}+\frac{1}{2}\mathbb{Z}} \lambda(0,0,\frac{1}{2};r,l,m)L_{l,m}^{r}, \quad \lambda(0,0,\frac{1}{2};r,l,m)\in \mathbb{F}.
\end{equation}

\begin{theorem}\label{thm:delta}
If~$\delta\in Der(A,\cdot)$,~ then $\delta$ is a derivation of $\mathfrak{L}$ if and only if~

$$\lambda(\frac{1}{2},0,0;r,l,m), \quad\lambda(0,\frac{1}{2},0;r,l,m)~ ~~\mbox{and}~~~ \lambda(0,0,\frac{1}{2};r,l,m)$$
 satisfies the following identities
\end{theorem}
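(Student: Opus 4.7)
The plan is to exploit the fact that, once $\delta\in\Der(A,\cdot)$ is fixed, formula \eqref{eq:DerA_cdot} of Theorem~\ref{thm:Dermulti} expresses $\delta(L_{l,m}^{r})$ entirely in terms of the three images $\delta(L_{0,0}^{1/2})$, $\delta(L_{1/2,0}^{0})$, $\delta(L_{0,1/2}^{0})$, and by Lemma~\ref{lem:basis} these three (together with $L_{0,0}^{-1}$, $L_{-1,0}^{0}$, $L_{0,-1}^{0}$) generate $\mathfrak{L}$ associatively. So the 3-Lie derivation condition \eqref{eq:3_LieD} places constraints precisely on the scalars $\lambda(\tfrac12,0,0;r,l,m)$, $\lambda(0,\tfrac12,0;r,l,m)$, $\lambda(0,0,\tfrac12;r,l,m)$ appearing in \eqref{eq:setD1}--\eqref{eq:setD3}, and the theorem is simply the explicit linearization of these constraints.

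For the forward direction I would assume $\delta$ is a derivation of $\mathfrak{L}$ and evaluate \eqref{eq:3_LieD} on triples of the shape $(L_{g},L_{l,m}^{r},L_{l',m'}^{r'})$, where $L_{g}$ ranges over the three generators $L_{0,0}^{1/2}$, $L_{1/2,0}^{0}$, $L_{0,1/2}^{0}$. On the left hand side I apply \eqref{eq:3-Jacobi bracket} first and then expand $\delta$ of the resulting basis vector using \eqref{eq:DerA_cdot}; on the right hand side I substitute \eqref{eq:setD1}--\eqref{eq:setD3} directly into the first summand and use \eqref{eq:DerA_cdot} on the other two. Matching coefficients of each basis vector $L_{l,m}^{r}$ produces the identities of the theorem, sorted naturally into three families, one per choice of $L_{g}$. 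For the backward direction the crucial input is the following reduction lemma: if $\delta\in\Der(A,\cdot)$ satisfies \eqref{eq:3_LieD} on every triple of generators from the set $S$ of Lemma~\ref{lem:basis}, then \eqref{eq:3_LieD} holds on every triple in $\mathfrak{L}$. The proof is a short induction: assuming \eqref{eq:3_LieD} for $(x,y,z)$ and $(x',y,z)$, expand
\[
\delta([xx',y,z])=\delta([x,y,z]\,x'+x\,[x',y,z])
\]
via \eqref{eq:multD} and rewrite each of $[\delta(xx'),y,z]$, $[xx',\delta(y),z]$, $[xx',y,\delta(z)]$ using the Leibniz rule \eqref{eq:Lei1}; the cross terms $\delta(x)[x',y,z]$ and $[x,y,z]\delta(x')$ match on both sides, and the remaining terms reassemble to give \eqref{eq:3_LieD} for $(xx',y,z)$. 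Iterating in each of the three slots, and handling $L_{0,0}^{-1}, L_{-1,0}^{0}, L_{0,-1}^{0}$ via the relations used in Lemma~\ref{lem:basis}, reduces the verification on all of $\mathfrak{L}$ to the finite list of generator triples, which are precisely what the identities encode.

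The main obstacle is not conceptual but combinatorial: the bracket \eqref{eq:3-Jacobi bracket} shifts both lower indices by $-1$, while \eqref{eq:DerA_cdot} shifts exactly one index by $-\tfrac12$, so composing $\delta$ with a triple product produces many nearly identical terms that must be carefully re-indexed before the identities appear in clean form. The bookkeeping is aided by the fact that the determinants $M(\alpha_1,\alpha_2,\alpha_3)$ collapse drastically when one argument has two vanishing entries, as happens for each of the three distinguished generators, so the resulting identities should be substantially simpler than the generic form of \eqref{eq:3_LieD}; nevertheless, keeping track of the index shifts in three parallel computations is the step most prone to error and is where the bulk of the work lies.
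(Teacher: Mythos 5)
Your proposal is correct and takes essentially the same route as the paper: both evaluate the $3$-Lie derivation identity \eqref{eq:3_LieD} on triples whose first entry is one of $L_{0,0}^{\frac{1}{2}}$, $L_{\frac{1}{2},0}^{0}$, $L_{0,\frac{1}{2}}^{0}$ and whose other two entries are arbitrary basis vectors, expand everything through \eqref{eq:DerA_cdot} and \eqref{eq:setD1}--\eqref{eq:setD3}, and match coefficients of the linearly independent basis vectors to arrive at \eqref{eq:GD}. Your reduction lemma --- that the derivation defect $D(x,y,z)$ satisfies $D(xx',y,z)=D(x,y,z)\,x'+x\,D(x',y,z)$ by \eqref{eq:multD} and \eqref{eq:Lei1} --- is exactly the justification, left implicit in the paper, for why checking these restricted triples (with the negative generators handled through the unit and invertibility of the basis elements) suffices for the converse; your further reduction to generator triples in all three slots is valid but unnecessary, since keeping the second and third arguments arbitrary is what yields the identities in the stated form.
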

\begin{equation}\label{eq:GD}
   \left\{\begin{split}
&\sum_{l,m,r}\lambda(\frac{1}{2},0,0;r,l,m)(r-\frac{1}{2})+\lambda(0,\frac{1}{2},0;r-\frac{1}{2},l+\frac{1}{2},m)(l+\frac{1}{2})\\
+&\lambda(0,0,\frac{1}{2};r-\frac{1}{2},l,m+\frac{1}{2})(m+\frac{1}{2})=0,\\
&\sum_{r,l,m}\lambda(\frac{1}{2},0,0;r+\frac{1}{2},l-\frac{1}{2},m)(r-\frac{1}{2})+\lambda(0,\frac{1}{2},0;r,l,m)(l+\frac{1}{2})\\
+&\lambda(0,0,\frac{1}{2};r,l-\frac{1}{2},m+\frac{1}{2})(m+\frac{1}{2})=0,\\
&\sum_{r,l,m}\lambda(\frac{1}{2},0,0;r+\frac{1}{2},l,m-\frac{1}{2})(r-\frac{1}{2})
+\lambda(0,\frac{1}{2},0;r,l+\frac{1}{2},m-\frac{1}{2})(l+\frac{1}{2})\\
+&\lambda(0,0,\frac{1}{2};r,l,m)(m+\frac{1}{2})=0,~~\forall~r,l,m\in\mathbb{Z}+\frac{1}{2}\mathbb{Z}.\\
  \end{split}\right.
\end{equation}

\begin{proof}
For~any $\delta\in Der(A,\cdot)$,~thanks to  Theorem \ref{thm:Dermulti}, $\delta$ is a derivation of $\mathfrak{L}$ if and only if $\delta$  satisfies
\begin{eqnarray}
&&\delta([L_{0,0}^{\frac{1}{2}},L_{l_2,m_2}^{r_2},L_{l_3,m_3}^{r_3}])
            - [\delta (L_{0,0}^{\frac{1}{2}}),L_{l_2,m_2}^{r_2},L_{l_3,m_3}^{r_3}]
-[L_{0,0}^{\frac{1}{2}},\delta (L_{l_2,m_2}^{r_2}),L_{l_3,m_3}^{r_3}]\nonumber\\
&&-[L_{0,0}^{\frac{1}{2}},L_{l_2,m_2}^{r_2},\delta (L_{l_3,m_3}^{r_3})]=0,\label{eq:Drlm1}\\
&&\delta([L_{\frac{1}{2},0}^{0},L_{l_2,m_2}^{r_2},L_{l_3,m_3}^{r_3}])
            - [\delta (L_{\frac{1}{2},0}^{0}),L_{l_2,m_2}^{r_2},L_{l_3,m_3}^{r_3}]
-[L_{\frac{1}{2},0}^{0},\delta (L_{l_2,m_2}^{r_2}),L_{l_3,m_3}^{r_3}]\nonumber\\
&&-[L_{\frac{1}{2},0}^{0},L_{l_2,m_2}^{r_2},\delta (L_{l_3,m_3}^{r_3})]=0,\label{eq:Drlm2}\\
&&\delta([L_{0,\frac{1}{2}}^{0},L_{l_2,m_2}^{r_2},L_{l_3,m_3}^{r_3}])
- [\delta (L_{0,\frac{1}{2}}^{0}),L_{l_2,m_2}^{r_2},L_{l_3,m_3}^{r_3}]-[L_{0,\frac{1}{2}}^{0},\delta (L_{l_2,m_2}^{r_2}),L_{l_3,m_3}^{r_3}]\nonumber\\
&&-[L_{0,\frac{1}{2}}^{0},L_{l_2,m_2}^{r_2},\delta (L_{l_3,m_3}^{r_3})]=0,~~\forall~l_2,m_2,r_2,l_3,m_3,r_3,\in\mathbb{Z}+\frac{1}{2}\mathbb{Z}.\label{eq:Drlm3}
\end{eqnarray}

We get that  Eq \eqref{eq:Drlm1} holds if and only if
\begin{equation}\label{eq:001}
\begin{split}
\left|
    \begin{array}{cc}
    l_2 & l_3 \\
    m_2 & m_3 \\
    \end{array}
\right|&\big(\sum_{r,l,m}\lambda(\frac{1}{2},0,0;r,l,m)(r-\frac{1}{2})L_{l+l_2+l_3-1,m+m_2+m_3-1}^{r+r_2+r_3}\\
+&\sum_{r,l,m}\lambda(0,\frac{1}{2},0;r,l,m)(l+\frac{1}{2})L_{l+l_2+l_3-\frac{3}{2},m+m_2+m_3-1}^{r+r_2+r_3+\frac{1}{2}}\\
+&\sum_{r,l,m}\lambda(0,0,\frac{1}{2};r,l,m)(m+\frac{1}{2})L_{l+l_2+l_3-1,m+m_2+m_3-\frac{3}{2}}^{r+r_2+r_3+\frac{1}{2}}\big)=0.\\
\end{split}
\end{equation}
Therefore, if ~$\left|
    \begin{array}{cc}
    l_2 & l_3 \\
    m_2 & m_3 \\
    \end{array}
\right|= 0$,~ \eqref{eq:Drlm1} holds. If~$\left|
    \begin{array}{cc}
    l_2 & l_3 \\
    m_2 & m_3 \\
    \end{array}
\right|\neq 0$, we have
\begin{equation}\label{eq:0012}
  \begin{split}
&\sum_{l,m,r}\big(\lambda(\frac{1}{2},0,0;r,l,m)(r-\frac{1}{2})+\lambda(0,\frac{1}{2},0;r-\frac{1}{2},l+\frac{1}{2},m)(l+\frac{1}{2})\\
+&\lambda(0,0,\frac{1}{2};r-\frac{1}{2},l,m+\frac{1}{2})(m+\frac{1}{2})\big)L_{l+l_2+l_3-1,m+m_2+m_3-1}^{r+r_2+r_3}=0.
  \end{split}
\end{equation}
By the similar discussion to the above, Eqs \eqref{eq:Drlm2} and \eqref{eq:Drlm3} hold if and only if
\begin{equation}\label{eq:0022}
  \begin{split}
&\sum_{r,l,m}\big(\lambda(\frac{1}{2},0,0;r+\frac{1}{2},l-\frac{1}{2},m)(r-\frac{1}{2})+\lambda(0,\frac{1}{2},0;r,l,m)(l+\frac{1}{2})\\
+&\lambda(0,0,\frac{1}{2};r,l-\frac{1}{2},m+\frac{1}{2})(m+\frac{1}{2})\big)L_{l+l_2+l_3-1,m+m_2+m_3-1}^{r+r_2+r_3}=0,
  \end{split}
\end{equation}
\begin{equation}\label{eq:0032}
  \begin{split}
&\sum_{r,l,m}\big(\lambda(\frac{1}{2},0,0;r+\frac{1}{2},l,m-\frac{1}{2})(r-\frac{1}{2})
+\lambda(0,\frac{1}{2},0;r,l+\frac{1}{2},m-\frac{1}{2})(l+\frac{1}{2})\\
+&\lambda(0,0,\frac{1}{2};r,l,m)(m+\frac{1}{2})\big)L_{l+l_2+l_3-1,m+m_2+m_3-1}^{r+r_2+r_3}=0,\\
  \end{split}
\end{equation}
hold, respectively. Thanks to $L_{l+l_2+l_3-1,m+m_2+m_3-1}^{r+r_2+r_3}\neq 0$, and Eqs \eqref{eq:0012}, \eqref{eq:0022} and \eqref{eq:0032}, we get that $\delta$ is a derivation of $\mathfrak{L}$ if and only if~Eq \eqref{eq:GD} holds.
\end{proof}

\bibliography{}

\end{document}